\theoremstyle{plain}
\newtheorem{proposition}{Proposition}[section]
\newtheorem{theorem}[proposition]{Theorem}
\newtheorem{lemma}[proposition]{Lemma}
\theoremstyle{definition}
\newtheorem{example}[proposition]{Example}
\theoremstyle{remark}
\newtheorem{remark}[proposition]{Remark}
\newtheorem{conjecture}[proposition]{Conjecture}
\DeclareMathOperator{\Cc}{\mathcal{C}}
\DeclareMathOperator{\Pc}{\mathcal{P}}
\DeclareMathOperator{\Bb}{\mathbb{B}}
\DeclareMathOperator{\Cb}{\mathbb{C}}
\DeclareMathOperator{\Hb}{\mathbb{H}}
\DeclareMathOperator{\Rb}{\mathbb{R}}
\DeclareMathOperator{\Sb}{\mathbb{S}}
\DeclareMathOperator{\PU}{\mathsf{PU}}
\DeclareMathOperator{\U}{\mathsf{U}}
\DeclareMathOperator{\id}{id}
\DeclareMathOperator{\Aut}{Aut}
\DeclareMathOperator{\Stab}{Stab}
\DeclareMathOperator{\Isom}{Isom}
\DeclareMathOperator{\dist}{d}
\newcommand{\abs}[1]{\left|#1\right|}
\newcommand{\ip}[1]{\langle#1\rangle}
\newcommand{\norm}[1]{\left\|#1\right\|}
\title{A rigidity result for proper holomorphic maps between balls}
\author{Edgar Gevorgyan}
\author{Haoran Wang}
\author{Andrew Zimmer}
\email{amzimmer2@wisc.edu}
\address{Department of Mathematics, University of Wisconsin-Madison}
\date{\today}
\begin{document}

\begin{abstract} 
In this note, we prove a rigidity result for  proper holomorphic maps between unit balls that have many symmetries and which extend to  $\mathcal{C}^2$-smooth maps on the boundary.
\end{abstract}

\maketitle

\section{Introduction}

In this note, we study proper holomorphic maps between unit balls in complex Euclidean space and prove a rigidity result for maps that have many symmetries and which extend to  $\mathcal{C}^2$-smooth maps on the boundary. There is extensive literature on proper holomorphic maps between balls, for details and references see the survey~\cite{Huang_survey}.

To state our result precisely, we need to introduce some basic terminology. Given an open set $V \subset \Cb^m$, we will let $\Aut(V)$ denote the automorphism group of $V$, that is the group of biholomorphic maps $V \rightarrow V$. When $V$ is bounded, a theorem of Cartan says that $\Aut(V)$ has  a Lie group structure where the map 
$$
(\varphi, z) \in \Aut(V) \times V \mapsto \varphi(z) \in V
$$
is smooth. 

Following D'Angelo--Xiao~\cite{DX_Advances,MR3917732}, given a holomorphic map $f : V \rightarrow W$, we consider the group
$$
\mathsf{G}_f : = \left\{ (\phi, \psi) : \phi \in \Aut(V), \psi \in \Aut(W), \psi \circ f = f \circ \phi\right\}. 
$$
As in D'Angelo--Xiao, we specialize to the case where $f: \Bb^m \rightarrow \Bb^M$ is a proper map between (Euclidean) unit balls $\Bb^m \subset \Cb^m$ and $\Bb^M \subset \Cb^M$.

\begin{example}\label{ex:stupid example}
If $m \leq M$ and $f : \Bb^m \rightarrow \Bb^M$ is the holomorphic map given by $f(z) = (z,0)$, then $\Aut(\Bb^m)$ naturally embeds into $\mathsf{G}_f$ and hence $\mathsf{G}_f$ is non-compact. 
\end{example}

Recently,  D'Angelo--Xiao~\cite[Corollary 3.2]{DX_Advances} proved that a proper \textbf{rational map} between unit balls with a non-compact automorphism group is, up to post and pre-composition by automorphisms, just the proper map in Example~\ref{ex:stupid example}. In this note, we establish the following generalization of their result. 

\begin{theorem}\label{thm:main} Suppose $f: \Bb^m \rightarrow \Bb^M$ is a proper holomorphic map which extends to a $\Cc^2$-smooth map $\overline{\Bb^m} \rightarrow \overline{\Bb^M}$. If $\mathsf{G}_f$ is non-compact, then there exist $\varphi_1 \in \Aut(\Bb^m)$ and $\varphi_2 \in \Aut(\Bb^M)$ such that 
$$
\varphi_2 \circ f \circ \varphi_1(z) = (z,0)
$$
for all $z \in \Bb^m$. 
\end{theorem}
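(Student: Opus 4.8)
The plan is to use the symmetry group $\mathsf{G}_f$ to produce a contracting sequence of automorphisms and then combine the $\Cc^2$ boundary regularity of $f$ with a scaling (``magnification'') argument near a boundary point to force $f$ into normal form. I would first reduce to the case that $f$ is \emph{non-degenerate}, i.e.\ $f(\Bb^m)$ spans no proper complex affine subspace of $\Cb^M$: otherwise $f(\Bb^m)$ lies in a complex geodesic submanifold, which an automorphism of $\Bb^M$ carries to $\Bb^{M'}\times\{0\}$; since $\psi\circ f(\Bb^m)=f\circ\phi(\Bb^m)=f(\Bb^m)$, every $\psi$ appearing in $\mathsf{G}_f$ preserves this slice, and the resulting homomorphism from $\mathsf{G}_f$ to the symmetry group of the corestricted map $\Bb^m\to\Bb^{M'}$ has compact kernel, so that group is again non-compact and the theorem for it implies the theorem for $f$. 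For non-degenerate $f$, two automorphisms of $\Bb^M$ agreeing on $f(\Bb^m)$ coincide, so $\pi_1\colon\mathsf{G}_f\to\Aut(\Bb^m)$, $(\phi,\psi)\mapsto\phi$, is injective; since $\mathsf{G}_f$ is closed (the relation $\psi\circ f=f\circ\phi$ passes to limits) and the companion $\psi$ cannot escape to infinity while $\phi$ stays bounded (that would push a compact set $f(K)$ to $\partial\Bb^M$), the image $G:=\pi_1(\mathsf{G}_f)$ is a closed non-compact subgroup of $\Aut(\Bb^m)=\PU(m,1)$ isomorphic to $\mathsf{G}_f$. By the Cartan decomposition of $\PU(m,1)$, a divergent sequence $\phi_n\in G$ has a subsequence with $\phi_n\to q$ and $\phi_n^{-1}\to q^-$ uniformly on compact subsets of $\Bb^m$ for some $q,q^-\in\partial\Bb^m$; the companions $\psi_n$ ($\psi_n\circ f=f\circ\phi_n$) then also diverge, $\psi_n\to q'$ and $\psi_n^{-1}\to(q')^-$ uniformly on compacts, and continuity of $f$ on $\overline{\Bb^m}$ gives $q'=f(q)$ and $(q')^-=f(q^-)$. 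Using the classification of isometries of complex hyperbolic space I would reduce to the case that $G$ contains a loxodromic $\gamma$ whose companion $\psi$ is also loxodromic (the parabolic cases being entirely analogous, with Heisenberg translations in place of the dilations below).

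\emph{Normalization and the boundary $2$-jet.} Replacing $f$ by $\varphi_2\circ f\circ\varphi_1$ and passing to Siegel (half-space) models, I would arrange that the fixed points of $\gamma$ are $0,\infty$ in $\Hb^m=\{\operatorname{Im}z_m>\abs{z'}^2\}$ and those of $\psi$ are $0,\infty$ in $\Hb^M$; then $\gamma$ and $\psi$ are nonisotropic dilations $\gamma(z',z_m)=(tUz',t^2z_m)$, $\psi(w',w_m)=(sVw',s^2w_m)$ with $t,s>1$, $U\in\U(m-1)$, $V\in\U(M-1)$, and, writing $f=(f',f_M)$ with $f(0)=0$ (since $f(q^-)=(q')^-$), the relation $\psi\circ f=f\circ\gamma$ becomes
$$f'(tUz',t^2z_m)=sV\,f'(z',z_m),\qquad f_M(tUz',t^2z_m)=s^2\,f_M(z',z_m).$$
Since $f$ is $\Cc^2$ on $\overline{\Hb^m}$ and holomorphic inside, $\bar\partial f$ vanishes on a neighborhood of $0$ in $\overline{\Hb^m}$ (it is continuous and vanishes on the dense interior), so the $2$-jet of $f$ at $0$ has no antiholomorphic part: $f(z',z_m)=L(z',z_m)+Q(z',z_m)+o(\abs{(z',z_m)}^2)$ with $L$ complex-linear and $Q$ complex-quadratic homogeneous.

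\emph{The magnification step.} Matching linear parts in the functional equation, unitarity of $U,V$ and $t>1$ kill the off-diagonal blocks of $L$, so $L(z',z_m)=(Az',dz_m)$; the slice $h(z_m):=f_M(0,z_m)$ is a non-constant holomorphic self-map of the upper half-plane with $h(0)=0$ that is $\Cc^1$ up to the boundary there, so its angular derivative $d=h'(0)$ is a positive real, and then also $s=t$. Now fix $w\in\Hb^m$ and feed $\gamma^{-n}$ into the relation: as $\gamma^{-n}w=(t^{-n}U^{-n}w',t^{-2n}w_m)\to0$, the boundary $2$-jet controls the left side of $f(\gamma^{-n}w)=\psi^{-n}f(w)$, whose right side is $(t^{-n}V^{-n}f'(w),\,t^{-2n}f_M(w))$; since the three bidegree pieces of $Q$ carry weights $t^{-2n},t^{-3n},t^{-4n}$, only the honest linear term survives after dividing the two coordinates by $t^{-n}$ and $t^{-2n}$ and letting $n\to\infty$ along a subsequence on which $U^{-n},V^{-n}$ converge. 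This gives, for \emph{all} $w\in\Hb^m$, that $f'(w)=Bw'$ for a fixed linear $B$ (independent of $w_m$) and $f_M(w)=dw_m+R(w')$ with $R$ quadratic in $w'$. Imposing properness---the identity $\operatorname{Im}f_M=\abs{f'}^2$ on $\partial\Hb^m$---and comparing bidegrees forces $R\equiv0$ and $B^{*}B=dI$; absorbing a dilation by $d^{-1/2}$ and a unitary carrying $B/\sqrt d$ to the standard inclusion into $\varphi_2$ yields $f(z',z_m)=((z',0),z_m)$, which back in the ball models is $f(z)=(z,0)$.

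\emph{Main obstacle.} The crux is the magnification step: converting the mere $2$-jet information about $f$ at the boundary point $q^-$ into an identity on all of $\Bb^m$. The idea is to iterate the symmetry $\gamma$ toward its \emph{repelling} fixed point, which magnifies the $\Cc^2$ jet at $q^-$ into global control, the distinct homogeneity weights of the dilation annihilating every nonlinear term; this is exactly where the $\Cc^2$ hypothesis is essential, since only the second-order coefficient of $f_M$ has the right weight to enter the final normal form. A secondary, mostly bookkeeping, difficulty is the reduction of a general non-compact $G$ to the loxodromic situation and the parallel treatment of the parabolic case via Heisenberg dilations and the $2$-jet at a generic boundary point.
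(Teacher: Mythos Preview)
Your strategy shares its endpoint with the paper—a rescaling in the Siegel/parabolic model that kills all but the admissible first- and second-order terms—but the route is different, and the difference matters because your reduction leaves a real gap.

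\medskip
\textbf{Where the two arguments diverge.} The paper never isolates a single element of $\mathsf{G}_f$ and never performs a loxodromic/parabolic case split. Instead it takes an arbitrary divergent sequence $(\phi_n,\psi_n)\in\mathsf{G}_f$, writes $\phi_n=k_na_{t_n}\alpha_n$ via the $KAK$ decomposition, and then proves a separate geometric lemma (Theorem~\ref{thm:radial lines}): for a proper holomorphic map that is Lipschitz up to the boundary, radial geodesics are sent uniformly close (in the Kobayashi metric) to radial geodesics. This lemma, proved via the Morse lemma in the Gromov-hyperbolic space $(\Bb^M,\dist_{\Bb^M})$, is exactly what guarantees that the companion rescalings $a_{-t_n}\ell_n^{-1}\psi_n$ stay in a compact set. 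Once both sides are synchronized, the relation $a_{-t_n}h_na_{t_n}=g_n$ with $h_n\to h$, $g_n\to g$ in $\Cc^2$ forces $g$ to be quadratic, and the anisotropic weights of $a_t$ finish as you do.

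\medskip
\textbf{The gap in your reduction.} Two points are not justified and are not mere bookkeeping:
\begin{itemize}
\item[(i)] A closed non-compact subgroup of $\PU(m,1)$ need not contain any loxodromic element; it can lie entirely in $MN$ inside a minimal parabolic (e.g.\ the Heisenberg group itself). In that case your ``entirely analogous'' claim fails: Heisenberg \emph{translations} carry no anisotropic weights, so iterating $\gamma^{\pm n}$ does not separate the Taylor pieces of $f$ by homogeneity; the magnification mechanism you describe simply does not start. One would have to conjugate by external dilations $a_{t_n}\notin G$, and then the synchronization of the target side is exactly the issue the paper's Theorem~\ref{thm:radial lines} is built to resolve.
\item[(ii)] Even when $\gamma$ is loxodromic you assume its companion $\psi$ is loxodromic, i.e.\ $f(q)\neq f(q^-)$. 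Nothing you wrote rules out $f(q)=f(q^-)$, which would make $\psi$ parabolic and break your normalization $\psi=(sV,s^2)$. One clean way to exclude this is precisely the paper's quasi-geodesic lemma: $f$ sends the axis of $\gamma$ to a $(1,\beta)$-quasi-geodesic in $(\Bb^M,\dist_{\Bb^M})$, and a bi-infinite quasi-geodesic in a Gromov-hyperbolic space has distinct endpoints. Your Julia--Carath\'eodory step presupposes the loxodromic normal form for $\psi$, so it cannot be used to establish it.
\end{itemize}
In short, the missing idea is a mechanism—independent of the dynamical type of the symmetry—for matching the rescaling rates on source and target. The paper supplies this via Kobayashi geometry; once you have such a lemma, either approach (single element or divergent sequence) can be completed.
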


\begin{remark} In the special case when $M < 2m-1$, a deep result of Huang~\cite{MR1703603} shows that every proper holomorphic map $f: \Bb^m \rightarrow \Bb^M$, which extends to a $\Cc^2$-smooth map on the boundary, satisfies the conclusion of Theorem~\ref{thm:main} (without any assumptions on $\mathsf{G}_f$). Further, when $M =2m-1$ there exists a proper holomorphic map $f: \Bb^m \rightarrow \Bb^M$ which extends to a $\Cc^2$-smooth map on the boundary but does not satisfy the conclusion of Theorem~\ref{thm:main}. \end{remark} 

Theorem~\ref{thm:main} is somewhat related to an old conjecture involving complex hyperbolic $m$-space, denoted $\Hb^m_{\Cb}$. This conjecture states that if $2 \leq m \leq M$ and $\rho : \Gamma \rightarrow \Isom(\Hb_{\Cb}^M)$ is a convex co-compact representation of a uniform lattice $\Gamma \leq \Isom(\Hb^m_{\Cb})$, then the image of $\rho$ preserves a totally geodesic copy of $\Hb^m_{\Cb}$ in $\Hb_{\Cb}^M$ (see for instance~\cite[Problem 3.2]{Huang_survey}). 

Using the work of Cao--Mok~\cite{CaoMok1990}, Yue~\cite{Yue1996} proved this conjecture in the particular case when $M \leq 2m-1$.

Complex hyperbolic $m$-space is biholomorphic to the unit ball $\Bb^m$ and, under this identification, $\Aut(\Bb^m)$ coincides with $\Isom_0(\Hb^m_{\Cb})$, the connected component of the identity in $\Isom(\Hb_m^{\Cb})$. Further, if $\rho: \Gamma \rightarrow \Isom(\Hb_{\Cb}^M)$ is as in the conjecture, then the theory of harmonic maps implies the existence of a $\rho$-equivariant proper holomorphic map $f: \Bb^m \rightarrow \Bb^M$ (see~\cite[pg. 348]{Yue1996}). Also, since the orbit map of any convex co-compact representation is a quasi-isometry, the map $f$ extends to a $\Cc^0$-smooth map $\partial \Bb^m \rightarrow \partial \Bb^M$.

Hence this conjecture can be essentially restated as follows:

\begin{conjecture} Suppose that $f: \Bb^m \rightarrow \Bb^M$ is a proper holomorphic map which extends to a $\Cc^0$-smooth map $\overline{\Bb^m} \rightarrow \overline{\Bb^M}$. If the image of the natural projection $\mathsf{G}_f \rightarrow \Aut(\Bb^m)$ contains a uniform lattice, then there exist $\varphi_1 \in \Aut(\Bb^m)$ and $\varphi_2 \in \Aut(\Bb^M)$ such that 
$$
\varphi_2 \circ f \circ \varphi_1(z) = (z,0)
$$
for all $z \in \Bb^m$. 
\end{conjecture}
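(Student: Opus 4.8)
The plan is to recast the hypothesis in the equivariant language of complex hyperbolic geometry, invoke the Bochner-type rigidity of Cao--Mok~\cite{CaoMok1990} and Yue~\cite{Yue1996} on the compact quotient, and then translate the conclusion back into a statement about $f$; the genuine difficulty -- indeed the content of the conjecture -- is that the Cao--Mok argument currently forces the restriction $M\le 2m-1$.

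\textbf{Step 1: the equivariant setup.} Assume $m\ge 2$. Replacing $\Bb^M$ by the smallest totally geodesic complex sub-ball containing $f(\Bb^m)$ and inducting on $M$, we may assume $f(\Bb^m)$ lies in no proper totally geodesic complex submanifold of $\Bb^M$; since automorphisms of $\Bb^M$ extend to projective-linear automorphisms of $\Cb\mathbb{P}^M$, whose fixed loci are linear, the only automorphism of $\Bb^M$ fixing $f(\Bb^m)$ pointwise is then $\id$. Hence each $\gamma$ in the uniform lattice $\Gamma\le\Aut(\Bb^m)=\Isom_0(\Hb^m_\Cb)$ supplied by the hypothesis has a unique $\rho(\gamma)\in\Aut(\Bb^M)$ with $\rho(\gamma)\circ f=f\circ\gamma$, and $\rho\colon\Gamma\to\Aut(\Bb^M)=\Isom_0(\Hb^M_\Cb)$ is a homomorphism; moreover $\Gamma\backslash\Hb^m_\Cb$ is compact. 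Using properness of $f$ and the $\Cc^0$ boundary extension $F\colon\partial\Bb^m\to\partial\Bb^M$ one checks: $\rho$ is discrete (if distinct $\gamma_n\to\infty$ had $\rho(\gamma_n)\to\id$, then $\gamma_n z\to\partial\Bb^m$ would give $f(\gamma_n z)\to\partial\Bb^M$, contradicting $f(\gamma_n z)=\rho(\gamma_n)f(z)\to f(z)$); after passing to a torsion-free finite-index subgroup, $\rho$ is faithful (a nontrivial normal $N\trianglelefteq\Gamma$ has limit set all of $\partial\Bb^m$, so if $N\subseteq\ker\rho$ then $f$ is invariant under a group with orbits accumulating at every boundary point, forcing $f(z_0)=F(\xi)\in\partial\Bb^M$ for every $\xi$ -- impossible since $f(z_0)\in\Bb^M$); and a standard dynamical argument then shows $F$ is injective and dynamics-preserving, so $\rho$ is convex cocompact. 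We are thus exactly in the situation of Yue~\cite{Yue1996}.

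\textbf{Step 2 (the crux): $f$ is totally geodesic.} Run the Gauss-equation/Bochner computation of Cao--Mok~\cite{CaoMok1990} for the \emph{holomorphic} map $f$ on the compact base $\Gamma\backslash\Hb^m_\Cb$: its second fundamental form $\sigma$ satisfies a $\bar\partial$-type equation, and integrating a Weitzenb\"ock identity for $\sigma$ over the compact quotient (legitimate since $\abs{\sigma}^2$ and the remaining quantities are $\Gamma$-invariant) yields, \emph{provided the curvature/rank inequality $M\le 2m-1$ holds}, an integrand of definite sign, whence $\sigma\equiv 0$ and $f$ is totally geodesic. This is Yue's theorem; removing the hypothesis $M\le 2m-1$ is precisely the open conjecture, and I expect this to be the main obstacle. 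I see no soft way around it: for $M\ge 2m$ the sign of the relevant curvature term is genuinely uncontrolled -- the same gap phenomenon that forces the restriction in Huang's work~\cite{MR1703603} and produces the counterexample to Theorem~\ref{thm:main} at $M=2m-1$ when no symmetry is assumed. A proof would have to inject new information here: perhaps a sharper $L^2$ vanishing estimate exploiting that (unlike in a Furstenberg-boundary/cocycle approach) $F$ is a genuine \emph{continuous} equivariant boundary map, or a refinement that combines the holomorphicity of $f$ with Mok--Siu--Yeung-type K\"ahler techniques so as to recover positivity in all codimensions. (Margulis/Zimmer superrigidity is unavailable: $\PU(m,1)$ has the Haagerup property, not property (T).)

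\textbf{Step 3: conclusion.} Once $f$ is totally geodesic, its image is a totally geodesic complex submanifold of $\Hb^M_\Cb$ biholomorphic to $\Hb^m_\Cb$, say $\Bb^m_0\subset\Bb^M$, and since totally geodesic holomorphic immersions preserve holomorphic sectional curvature, $f\colon\Bb^m\to\Bb^m_0$ is a holomorphic isometric covering, hence (both sides simply connected) a biholomorphism. Taking $\varphi_2\in\Aut(\Bb^M)$ with $\varphi_2(\Bb^m_0)=\Bb^m\times\{0\}$ and then $\varphi_1\in\Aut(\Bb^m)$ so that $\varphi_2\circ f\circ\varphi_1$ is the identity in the first $m$ coordinates gives $\varphi_2\circ f\circ\varphi_1(z)=(z,0)$. (Alternatively, had Step 2 only yielded that $\rho(\Gamma)$ preserves a totally geodesic sub-ball $\Bb^m_0$ with equivariant embedding $\iota\colon\Bb^m\to\Bb^m_0$, one could still finish: $z\mapsto\dist_{\Bb^M}(f(z),\iota(z))$ is $\Gamma$-invariant, hence bounded on the compact quotient, so writing $\Bb^m_0=\Bb^m\times\{0\}$ and $f=(f',f'')$ the distance formula gives $\abs{f''(z)}^2\le\theta(1-\abs{f'(z)}^2)$ with $\theta<1$; properness then forces $\abs{f''(z)}\to 0$ as $\abs{z}\to 1$, so the bounded holomorphic map $f''$ has zero boundary values and vanishes identically, and $f$ is a proper holomorphic self-map of $\Bb^m$, hence an automorphism -- this is where the $\Cc^0$ extension is used.)
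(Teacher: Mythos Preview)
The statement you are attempting to prove is stated in the paper as a \emph{conjecture}, not as a theorem; the paper offers no proof of it. Your proposal is therefore not being compared against an existing argument but against an open problem, and you yourself identify this correctly: Step~2 is exactly where the argument breaks, and you say so. The Cao--Mok/Yue machinery you invoke establishes the conclusion only under the codimension restriction $M\le 2m-1$, and your remarks about why the Bochner term loses sign beyond that range, why superrigidity is unavailable for $\PU(m,1)$, and what sort of new input might help are all accurate and well-informed. But none of this constitutes a proof: Step~2 as written is a restatement of the conjecture, not a resolution of it.

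A few smaller points. In Step~1, the reduction ``replace $\Bb^M$ by the smallest totally geodesic complex sub-ball containing $f(\Bb^m)$ and induct'' already presupposes what you want, since if $f(\Bb^m)$ lies in a proper totally geodesic sub-ball you are done without induction; what you actually need (and use) is the uniqueness of $\rho(\gamma)$, and for that it suffices to note that the stabilizer of $f(\Bb^m)$ pointwise is trivial once $f$ is not contained in a proper complex-linear slice, which follows from properness. Your argument that $\rho$ is convex cocompact is sketched very loosely; making this precise (in particular, that the continuous boundary extension $F$ is injective) is not entirely routine. Finally, the alternative finishing argument in Step~3 is nice but the inequality $\abs{f''(z)}^2\le\theta(1-\abs{f'(z)}^2)$ with $\theta<1$ does not quite follow from a bound on $\dist_{\Bb^M}(f(z),\iota(z))$ alone in the way you suggest; one needs a bit more care with the explicit distance formula.

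In short: your write-up is a clear and honest survey of the known approach and its limitations, but it is not a proof, and the paper does not claim one either.
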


The proof of Theorem~\ref{thm:main} is motivated by the proof of the Wong and Rosay ball theorem~\cite{MR492401,MR558590}, which states that a strongly pseudoconvex domain $\Omega \subset \Cb^m$ with non-compact automorphism is biholomorphic to the unit ball. In the standard proof of this result, one considers a sequence $\{ \varphi_n\}$ of automorphisms of $\Omega$ with no convergent subsequence. Fixing a point $p_0\in \Omega$ and passing to a subsequence, one can suppose that $\varphi_n(p_0) \rightarrow x \in \partial \Omega$. One then carefully constructs a sequence of affine dilations $\{A_n\}$ centered at $x$, where after passing to a subsequence, the maps $A_n \circ\varphi_n : \Omega \rightarrow \Cb^m$ converge to a biholomorphism from the domain $\Omega$ to an affine translation of the parabolic model of the unit ball. 

The proof of Theorem~\ref{thm:main} also uses a rescaling argument, but instead of using affine maps, we use automorphisms of the unit ball. 

\subsection*{Acknowledgements} E.G. and H.W. were participants in an REU at UW-Madison in the Summer of 2022 supported by National Science Foundation grants DMS-2037851, DMS-1653264, and DMS-2105580.

A.Z. was partially supported by a Sloan research fellowship and grants DMS-2105580 and DMS-2104381 from the National Science Foundation.

\section{Reminders}\label{sec:reminders}

Recall that $\mathsf{U}(m,1)$ is the subgroup of $\mathsf{GL}(m+1,\Cb)$ which preserves the Hermitian two form 
$$
[z,w]_{m,1}= z_1 \bar{w}_1 + \dots + z_{m} \bar{w}_m - z_{m+1} \bar{w}_{m+1}.
$$
We can identify $\Aut(\Bb^m)$ with the quotient group 
$$
\PU(m,1) := \mathsf{U}(m,1) / (\Sb^1 \cdot \id_{m+1})
$$ 
via the action
$$
\begin{bmatrix} A & b \\ c^{T} & d \end{bmatrix}( z) = \frac{Az + b}{c^Tz + d}.
$$
We will identify $\U(m)$ with the subgroup 
$$
\left\{ \begin{bmatrix} A & \\ & 1 \end{bmatrix} : A \in \U(m) \right\} \leq \PU(m,1). 
$$
Then, under the identification $\Aut(\Bb^m) = \PU(m,1)$, we have 
$$
\U(m) = \Stab_{\Aut(\Bb^m)}(0) = \{ g \in \Aut(\Bb^m) : g(0) = 0\}.
$$
Further, if $k = \begin{bmatrix} A & \\ & 1 \end{bmatrix} \in \U(m)$, then $k$ acts on $\Bb^m$ by
\begin{equation*}
k(z) = Az.
\end{equation*}
We also consider the standard Cartan subgroup, $\mathsf{A}=\{ a_t : t \in \Rb\} \subset \PU(m,1)$, where 
$$
a_t = \begin{bmatrix} \cosh(t) &    & \sinh(t) \\ &  {\rm Id}_{m-1}  & \\ \sinh(t) &    & \cosh(t) \end{bmatrix}.
$$
Notice that 
\begin{equation*}
    a_t(0) = (\tanh(t), 0,\dots, 0)
\end{equation*}
for all $t \in \Rb$.

Finally, we recall the parabolic model of the ball. Let 
$$
\Pc^m := \left\{ z \in \Cb^m : {\rm Im}(z_1) > \abs{z_2}^2 + \cdots + \abs{z_m}^2 \right\}.
$$
Then the map 
$$
F_m(z) = \left(  i\frac{1-z_1}{1+z_1}, \frac{z_2}{1+z_1}, \dots, \frac{z_m}{1+z_1} \right)
$$
is a biholomorphism $\Bb^m \rightarrow \Pc^m$ with inverse 
$$
F_m^{-1}(z) = \left( \frac{z_1-i}{z_1+i}, \frac{2z_2}{z_1+i}, \dots, \frac{2z_m}{z_1+i} \right).
$$
Notice that 
\begin{equation}\label{eqn:action of At on parabola model} 
F_m \circ a_t \circ F_m^{-1}(z) = (e^{-t} z_1, e^{-t/2} z_2, \dots, e^{-t/2} z_m) 
\end{equation} 
for all $t \in \Rb$.

\section{The image of radial lines}

In this section, we consider the case of proper holomorphic maps between unit balls which extend to Lipschitz maps on the boundaries. For such maps, we show that radial lines get mapped  close to radial lines, where ``closeness'' is in terms of the Kobayashi distance. 

In what follows, let $\dist_{\Omega}$ denote the Kobayashi pseudo-distance on a bounded domain $\Omega \subset \Cb^m$.

\begin{theorem}\label{thm:radial lines} Suppose $f: \Bb^m \rightarrow \Bb^M$ is a proper holomorphic map which extends to a Lipschitz map $\overline{\Bb^m} \rightarrow \overline{\Bb^M}$. There exists $C > 0$ such that: if $v \in \partial \Bb^m$, then 
$$
\dist_{\Bb^M}\big( f(t v), t f(v)\big) \leq C
$$
for all $t \in [0,1)$. 
\end{theorem}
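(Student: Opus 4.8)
The plan is to estimate the Kobayashi distance $\dist_{\Bb^M}(f(tv), tf(v))$ using the explicit formula for the Kobayashi distance on the ball together with the Lipschitz control on $f$ up to the boundary. First I recall that on $\Bb^M$ the Kobayashi distance is the Bergman metric distance and satisfies, for two nearby points $p, q$,
\[
\dist_{\Bb^M}(p,q) \asymp \log\left(1 + \frac{\norm{p-q}}{\sqrt{\delta(p)\delta(q)}}\right) + \text{(a bounded transverse term)},
\]
where $\delta(\cdot) = 1 - \norm{\cdot}^2$ denotes the squared-distance to $\partial \Bb^M$; more precisely one has the two-sided estimate of the Kobayashi distance on strongly convex domains (Abate's estimates), namely $\dist_{\Bb^M}(p,q)$ is comparable to $\tfrac12\log\frac{\delta(p)}{\delta(q)} + \tfrac12\log\frac{\delta(q)}{\delta(p)}$ plus a term controlled by $\norm{p-q}^2/(\delta(p)+\delta(q))$ and by the ``horizontal'' displacement. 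So it suffices to show two things: (i) $\delta(f(tv))$ and $\delta(tf(v))$ are comparable, uniformly in $v$ and $t$, and (ii) the numerator $\norm{f(tv) - tf(v)}$ is controlled by a constant times $\sqrt{\delta(f(tv))\,\delta(tf(v))}$, i.e. by a constant times $(1-t)$ up to comparability.

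For (i): the Lipschitz extension gives $\norm{f(tv) - f(v)} \le L(1-t)$, hence $1 - \norm{f(tv)} \ge \norm{f(v)} - \norm{f(tv)} \ge -L(1-t) + (1 - \norm{f(tv)})$... more usefully, since $f$ is proper it maps $\partial\Bb^m$ to $\partial\Bb^M$, so $\norm{f(v)} = 1$, and the Lipschitz bound yields $1 - \norm{f(tv)} \le \norm{f(v) - f(tv)} \le L(1-t)$. Conversely, by the Hopf lemma / Schwarz-type estimate for proper holomorphic maps between balls (indeed $1 - \norm{f(z)}^2 \ge c(1 - \norm{z}^2)$ for some $c>0$, which follows from properness plus, e.g., the invariant Schwarz lemma comparing Kobayashi metrics), we get $1 - \norm{f(tv)} \ge 1 - \norm{f(tv)}^2 \gtrsim 1 - \norm{tv}^2 = 1 - t^2 \asymp 1-t$. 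On the other hand $1 - \norm{tf(v)}^2 = 1 - t^2 \asymp 1-t$ directly since $\norm{f(v)}=1$. Thus $\delta(f(tv)) \asymp 1-t \asymp \delta(tf(v))$, with constants depending only on $L$ and $M$, not on $v$.

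For (ii): write $f(tv) - tf(v) = \big(f(tv) - f(v)\big) + (1-t)f(v)$. The first term has norm at most $L(1-t)$. The second has norm exactly $(1-t)$. So $\norm{f(tv) - tf(v)} \le (L+1)(1-t) \asymp \sqrt{\delta(f(tv))\delta(tf(v))}$. Feeding (i) and (ii) into the Kobayashi distance estimate: the ratio $\delta(f(tv))/\delta(tf(v))$ is bounded above and below, and the displacement term $\norm{f(tv)-tf(v)}^2/(\delta(f(tv))+\delta(tf(v)))$ is $\lesssim (1-t)^2/(1-t) = 1-t \le 1$, so altogether $\dist_{\Bb^M}(f(tv), tf(v)) \le C$ with $C = C(L,M)$. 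I should double-check the endpoint behavior as $t\to 0$ (where everything is trivially bounded since $f(0)$ and $0$ are at bounded distance) and as $t\to 1$ (which is where the estimate above does the work), and also handle the possibility that $f(v)$ with $\norm{f(v)}=1$ does not lie on the ``axis'' — but the Kobayashi estimate on the ball is rotation-invariant, so after applying a unitary we may assume $f(v) = (1,0,\dots,0)$ and reduce to a two-variable computation.

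The main obstacle I anticipate is justifying the lower bound $\delta(f(z)) \gtrsim \delta(z)$ with a constant independent of $z$: this is exactly a uniform Hopf lemma for the defining function $1 - \norm{f}^2$, and while it is standard for proper holomorphic maps between balls (it follows from the distance-decreasing property of the Kobayashi metric applied to $f$, which forces $1-\norm{f(z)}^2 \ge (1-\norm{z}^2)$ after tracking the one-dimensional slices, or alternatively from Huang's boundary estimates), I would want to state it cleanly, perhaps as a preliminary lemma, since the whole argument hinges on it and on the matching upper bound coming from the Lipschitz hypothesis. If a clean reference is not available I would prove it via the one-variable Schwarz lemma restricted to the disc $\lambda \mapsto f(\lambda v)$.
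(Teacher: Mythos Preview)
Your approach is essentially correct and genuinely different from the paper's. The paper proves the theorem via Gromov hyperbolic geometry: it shows that $t \mapsto f(\tanh(t)v)$ is a $(1,\beta)$-quasi-geodesic in $(\Bb^M,\dist_{\Bb^M})$ (using the Lipschitz bound for the lower distance estimate), and then invokes the Morse Lemma to conclude that this quasi-geodesic stays within bounded Hausdorff distance of the true geodesic $t \mapsto \tanh(t)f(v)$. You instead exploit the explicit Kobayashi distance formula on the ball together with the two-sided comparison $\delta(f(tv)) \asymp 1-t \asymp \delta(tf(v))$ and the Euclidean bound $\norm{f(tv)-tf(v)} \lesssim 1-t$. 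Your route is more elementary and avoids the Morse Lemma entirely; the paper's route is more robust and would survive in settings where no closed formula for the Kobayashi distance is available.

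Two points to tighten. First, your invocation of ``Abate-type estimates'' is vague and not quite the right tool; since you are on the ball, just use the exact formula
\[
\cosh^2 \dist_{\Bb^M}(p,q)=\frac{\abs{1-\ip{p,q}}^2}{(1-\norm{p}^2)(1-\norm{q}^2)}
\]
directly. With $p=f(tv)$ and $q=tf(v)$ one has $1-\ip{p,q}=(1-\norm{q}^2)-\ip{p-q,q}$, so $\abs{1-\ip{p,q}}\le (1-t^2)+\norm{p-q}\lesssim 1-t$, and combined with your step (i) the ratio is bounded. This replaces the imprecise ``displacement term $\norm{p-q}^2/(\delta(p)+\delta(q))$'' discussion, which as written does not match the ball formula. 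Second, the lower bound $1-\norm{f(z)}\gtrsim 1-\norm{z}$ that you flag as the main obstacle is immediate from the distance-decreasing property of $\dist$: from $\dist_{\Bb^M}(0,f(z))\le \dist_{\Bb^M}(0,f(0))+\dist_{\Bb^m}(0,z)$ and $\dist_{\Bb^j}(0,w)=\tfrac12\log\tfrac{1+\norm{w}}{1-\norm{w}}$ one reads off $1-\norm{f(z)}\ge c\,(1-\norm{z})$ with $c$ depending only on $\dist_{\Bb^M}(0,f(0))$. No Hopf lemma or boundary regularity is needed for this direction.
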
 

\subsection{Properties of the Kobayashi metric}  Before starting the proof, we need to recall some properties of the Kobayashi pseudo-distance, for more background see ~\cite{MR1098711}.

The fundamental property of the Kobayashi pseudo-distance is that it is non-increasing with respect to holomorphic maps.

\begin{proposition}If $f : \Omega_1 \rightarrow \Omega_2$ is a holomorphic map between domains, then 
$$
\dist_{\Omega_2}(f(z), f(w)) \leq \dist_{\Omega_1}(z,w)
$$
for all $z,w \in \Omega_1$. 
\end{proposition}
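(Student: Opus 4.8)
The plan is to argue directly from the definition of the Kobayashi pseudo-distance as an infimum over chains of analytic disks, and simply transport such chains through $f$. Recall the construction: let $\rho$ be the Poincar\'e distance on the unit disk $\Db \subset \Cb$, and for a domain $\Omega$ and points $z,w \in \Omega$ call a \emph{chain from $z$ to $w$} a finite collection of holomorphic maps $g_1,\dots,g_k : \Db \rightarrow \Omega$ together with parameters $\zeta_1,\dots,\zeta_k \in \Db$ satisfying $g_1(0) = z$, $g_k(\zeta_k) = w$, and $g_{j+1}(0) = g_j(\zeta_j)$ for $1 \le j \le k-1$. Then
$$
\dist_{\Omega}(z,w) = \inf \sum_{j=1}^k \rho(0,\zeta_j),
$$
the infimum being over all chains from $z$ to $w$ in $\Omega$.

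First I would fix $z,w \in \Omega_1$ and an arbitrary chain $g_1,\dots,g_k : \Db \rightarrow \Omega_1$ with parameters $\zeta_1,\dots,\zeta_k$ joining $z$ to $w$. Since $f$ is holomorphic, each composition $f \circ g_j : \Db \rightarrow \Omega_2$ is holomorphic, and I would check that $f \circ g_1, \dots, f \circ g_k$ together with the \emph{same} parameters $\zeta_1,\dots,\zeta_k$ form a chain from $f(z)$ to $f(w)$ in $\Omega_2$: indeed $(f \circ g_1)(0) = f(z)$, $(f \circ g_k)(\zeta_k) = f(w)$, and $(f \circ g_{j+1})(0) = f(g_j(\zeta_j)) = (f \circ g_j)(\zeta_j)$. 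By definition of $\dist_{\Omega_2}$ as an infimum, this yields
$$
\dist_{\Omega_2}(f(z), f(w)) \le \sum_{j=1}^k \rho(0,\zeta_j).
$$
Taking the infimum over all chains from $z$ to $w$ in $\Omega_1$ then gives $\dist_{\Omega_2}(f(z),f(w)) \le \dist_{\Omega_1}(z,w)$.

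There is essentially no obstacle here: the only facts used are that a composition of holomorphic maps is holomorphic and that the length of a chain is recorded purely by the disk parameters $\zeta_j$, which are unaffected by post-composition with $f$. If one prefers to work with the infinitesimal Kobayashi metric and its integrated form, the same one-line observation suffices — a disk $g : \Db \rightarrow \Omega_1$ with $g(0) = z$ pushes forward to $f \circ g : \Db \rightarrow \Omega_2$ with $(f \circ g)'(0) = df_z(g'(0))$, so the infinitesimal metric does not increase under $f$, and integrating along paths gives the stated inequality.
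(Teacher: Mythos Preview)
Your argument is correct and is the standard one: push analytic disks (or chains of them) forward through $f$, observe that the disk parameters are unchanged, and take the infimum. There is nothing to compare against, however, because the paper does not supply a proof of this proposition at all --- it is stated in the ``Reminders'' section as a background fact about the Kobayashi pseudo-distance, with the reader referred to \cite{MR1098711} for details. So your write-up simply fills in what the paper deliberately omits.
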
 

The Kobayashi pseudo-distance on the unit ball is actually a distance and can be explicitly computed:
\begin{equation}
\label{eqn:distance in ball}
\dist_{\Bb^m}(z,w) = {\rm cosh}^{-1} \sqrt{ \frac{ \abs{1-\ip{z,w}}^2}{(1-\norm{z}^2)(1-\norm{w}^2)}}
\end{equation}
for all $z,w \in \Bb^m$.

The Kobayashi distance on the unit ball is also a proper geodesic Gromov hyperbolic metric space (in fact the Kobayashi metric on any bounded strongly pseudoconvex domain is a proper geodesic Gromov hyperbolic metric, see~\cite{MR1793800}). The exact definition of this class of metric spaces is unimportant for our work, but we will use one of their properties: the Morse Lemma. 

To state the Morse Lemma, we need to recall a few preliminary definitions. If $(X,\dist)$ is a metric space and $I \subset \Rb$ is an interval, then a map $\sigma : I \rightarrow X$ is:
\begin{itemize}
\item a \emph{geodesic} if 
$$
\dist(\sigma(s), \sigma(t)) = \abs{t-s} \quad \text{for all} \quad s,t \in I,
$$
\item an \emph{$(\alpha,\beta)$-quasi-geodesic}  (where $\alpha \geq 1$, $\beta \geq 0$) if 
$$
\frac{1}{\alpha}\abs{t-s}-\beta \leq \dist(\sigma(s), \sigma(t)) = \alpha \abs{t-s}+\beta \quad \text{for all} \quad s,t \in I. 
$$
\end{itemize}
Finally, if $I_1, I_2 \subset \Rb$ are intervals then we define the \emph{Hausdorff pseudo-distance} between two maps $\sigma_1 : I_1 \rightarrow X$ and $\sigma_2 : I_2 \rightarrow X$ to be 
$$
\dist^{\rm Haus}(\sigma_1, \sigma_2) = \max\left\{ \sup_{t \in I_1} \inf_{s \in I_2} \dist(\sigma_1(t), \sigma_2(s)),  \sup_{s \in I_2} \inf_{t \in I_1} \dist(\sigma_1(t), \sigma_2(s)) \right\}. 
$$

The Morse Lemma states that any two quasi-geodesics which start and end near each other have bounded Hausdorff distance.

\begin{theorem}[The Morse Lemma]\label{thm:morse lemma} For any $m \geq 1$, $\alpha \geq 1$, $\beta \geq 0$, and $R \geq 0$, there exists $D= D(m,\alpha, \beta, R)$ such that: if $\sigma_1:[a_1,b_1] \rightarrow \Bb^m$ and $\sigma_2:[a_2,b_2] \rightarrow \Bb^m$ are $(\alpha,\beta)$-quasi-geodesics in $(\Bb^m, \dist_{\Bb^m})$ and 
$$
\max\left\{ \dist_{\Bb^m}\big(\sigma_1(a_1), \sigma_2(a_2)\big), \dist_{\Bb^m}\big(\sigma_1(b_1), \sigma_2(b_2)\big) \right\} \leq R, 
$$
then 
$$
\dist^{\rm Haus}_{\Bb^m}(\sigma_1, \sigma_2) \leq D.
$$
\end{theorem}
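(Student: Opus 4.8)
The plan is to deduce the statement from the classical Morse Lemma (stability of quasi-geodesics) in Gromov hyperbolic metric spaces, using the fact recalled above that $(\Bb^m, \dist_{\Bb^m})$ is a proper geodesic Gromov hyperbolic space. Fix, once and for all, a constant $\delta = \delta(m) \geq 0$ such that every geodesic triangle in $(\Bb^m, \dist_{\Bb^m})$ is $\delta$-thin; then every geodesic quadrilateral is $2\delta$-thin. The role of the parameter $m$ in the statement is exactly to track this dependence of $\delta$ on the dimension.

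\emph{Step 1: comparison with geodesics through the endpoints.} It is a standard fact in the theory of Gromov hyperbolic spaces (see e.g. Bridson--Haefliger, \emph{Metric spaces of non-positive curvature}, Ch.~III.H, Thm.~1.7) that there is a constant $D_0 = D_0(\delta, \alpha, \beta)$ with the following property: if $\sigma : [a,b] \to \Bb^m$ is an $(\alpha,\beta)$-quasi-geodesic and $\gamma$ is a geodesic from $\sigma(a)$ to $\sigma(b)$, then $\dist^{\rm Haus}_{\Bb^m}(\sigma, \gamma) \leq D_0$. Although $\sigma$ is not assumed continuous, this causes no difficulty: one first replaces $\sigma$ by a continuous $(\alpha',\beta')$-quasi-geodesic with the same endpoints whose image is within bounded Hausdorff distance of that of $\sigma$, and then the usual exponential-divergence argument applies. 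Applying this to $\sigma_1$ and $\sigma_2$ and choosing geodesics $\gamma_i$ from $\sigma_i(a_i)$ to $\sigma_i(b_i)$, we obtain $\dist^{\rm Haus}_{\Bb^m}(\sigma_i, \gamma_i) \leq D_0$ for $i = 1,2$.

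\emph{Step 2: comparison of the two geodesics.} Set $p_i = \sigma_i(a_i)$ and $q_i = \sigma_i(b_i)$, so that $\dist_{\Bb^m}(p_1, p_2) \leq R$ and $\dist_{\Bb^m}(q_1, q_2) \leq R$ by hypothesis. Fix geodesics $[p_1, p_2]$ and $[q_1, q_2]$ and consider the geodesic quadrilateral with sides $\gamma_1$, $[q_1, q_2]$, $\gamma_2$, $[p_2, p_1]$. Since this quadrilateral is $2\delta$-thin, each point of $\gamma_1$ lies within $2\delta$ of the union of the other three sides; as the sides $[q_1, q_2]$ and $[p_2, p_1]$ have length at most $R$ and have their endpoints on $\gamma_2$, each such point lies within $2\delta + R$ of $\gamma_2$. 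The symmetric argument gives $\dist^{\rm Haus}_{\Bb^m}(\gamma_1, \gamma_2) \leq 2\delta + R$.

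\emph{Step 3: conclusion.} The Hausdorff pseudo-distance satisfies the triangle inequality, so
$$
\dist^{\rm Haus}_{\Bb^m}(\sigma_1, \sigma_2) \leq \dist^{\rm Haus}_{\Bb^m}(\sigma_1, \gamma_1) + \dist^{\rm Haus}_{\Bb^m}(\gamma_1, \gamma_2) + \dist^{\rm Haus}_{\Bb^m}(\gamma_2, \sigma_2) \leq 2D_0 + 2\delta + R,
$$
and the theorem holds with $D = D(m,\alpha,\beta,R) := 2 D_0(\delta(m), \alpha, \beta) + 2\delta(m) + R$. The only substantive ingredient is the classical Morse Lemma invoked in Step 1, which ultimately rests on the exponential divergence of geodesics in a $\delta$-hyperbolic space; since the Gromov hyperbolicity of $(\Bb^m, \dist_{\Bb^m})$ has already been recorded, we treat it as a black box, and the remaining steps are routine bookkeeping with the Hausdorff pseudo-distance. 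The one point deserving a little care is that quasi-geodesics need not be continuous, which is handled by the standard passage to a continuous representative.
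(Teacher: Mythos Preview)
Your proof is correct and follows the same route as the paper, which simply cites Bridson--Haefliger, Chapter~III.H, Theorem~1.7, as a black box valid in any proper geodesic Gromov hyperbolic space. You go a bit further by spelling out the routine reduction from the version with $R$-close endpoints (as stated here) to the classical ``same endpoints'' formulation in that reference, via the thin-quadrilateral and Hausdorff-distance triangle-inequality steps; this is exactly the intended argument.
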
 

\begin{proof} A proof, which is valid for any proper geodesic Gromov hyperbolic metric space, may be found in~\cite[Chapter III.H Theorem 1.7]{MR1744486}. \end{proof} 

\subsection{Proof of Theorem~\ref{thm:radial lines}} 

Since $f$ is Lipschitz on $\overline{\Bb^m}$ and $f(\partial \Bb^m) \subset \partial \Bb^M$, there exists $C > 0$ such that 
\begin{equation}\label{eqn:distance to boundary}
1-\norm{f(z)} \leq C(1-\norm{z})
\end{equation} 
for all $z \in \overline{\Bb^m}$. 

For $v \in \partial \Bb^m$, define $\sigma_v : [0,\infty) \rightarrow \Bb^m$ by $\sigma_v(t) = \tanh(t) v$. Then Equation~\eqref{eqn:distance in ball} implies that $\sigma_v$ is a geodesic in $(\Bb^m, \dist_{\Bb^m})$. We prove that the image under $f$ of $\sigma_v$ in $\Bb^M$ is a quasi-geodesic. 

\begin{lemma} If $v \in \partial \Bb^m$, then $f \circ \sigma_v$ is a $(1,\beta)$-quasi-geodesic in $(\Bb^M, \dist_{\Bb^M})$, where 
$$
\beta:= \frac{1}{2} \log(2C)+ \dist_{\Bb^M}(0,f(0)).
$$
\end{lemma}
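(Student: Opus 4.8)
The plan is to bound $\dist_{\Bb^M}(f\sigma_v(s), f\sigma_v(t))$ both above and below in terms of $|s-t|$. The upper bound with $\alpha = 1$ and $\beta = 0$ is free: since $\sigma_v$ is a geodesic in $(\Bb^m, \dist_{\Bb^m})$ and $f$ is holomorphic, the distance-decreasing property of the Kobayashi pseudo-distance gives $\dist_{\Bb^M}(f\sigma_v(s), f\sigma_v(t)) \leq \dist_{\Bb^m}(\sigma_v(s), \sigma_v(t)) = |s-t|$. So the entire content of the lemma is the lower bound $\dist_{\Bb^M}(f\sigma_v(s), f\sigma_v(t)) \geq |s-t| - \beta$.

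For the lower bound, I would use the explicit formula~\eqref{eqn:distance in ball} for the Kobayashi distance on the ball together with the estimate~\eqref{eqn:distance to boundary}. Fix $s < t$ and write $z = \sigma_v(s) = \tanh(s) v$, $w = \sigma_v(t) = \tanh(t) v$. On one hand, from~\eqref{eqn:distance in ball} a crude but sufficient bound is
$$
\dist_{\Bb^M}(f(z), f(w)) \geq \cosh^{-1}\sqrt{ \frac{1}{(1-\norm{f(z)}^2)(1-\norm{f(w)}^2)}} = \frac{1}{2}\log \frac{1}{(1-\norm{f(z)}^2)(1-\norm{f(w)}^2)} + o(1),
$$
using $|1 - \ip{f(z), f(w)}| \geq 1 - \norm{f(z)}\norm{f(w)} \geq \tfrac{1}{2}(1-\norm{f(z)}^2)^{1/2}(1-\norm{f(w)}^2)^{1/2}$ or simply $|1-\ip{f(z),f(w)}| \ge 1 - \norm{f(z)} \geq \tfrac12(1-\norm{f(z)}^2)$; I will pick whichever inequality makes the constants cleanest. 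Then apply~\eqref{eqn:distance to boundary} in the form $1 - \norm{f(z)}^2 \leq 2(1-\norm{f(z)}) \leq 2C(1-\norm{z})$ and note $1 - \norm{z} = 1 - \tanh(s)$, so that $\tfrac12 \log \tfrac{1}{1-\norm{f(z)}^2} \geq \tfrac12\log\tfrac{1}{2C} + \tfrac12 \log\tfrac{1}{1-\tanh(s)} = \tfrac12\log\tfrac{1}{2C} + s + O(1)$, since $\tfrac12\log\tfrac{1}{1-\tanh(s)} = s + \tfrac12\log\tfrac{1+\tanh s}{2} \to s$. Combining the $s$ and $t$ contributions gives $\dist_{\Bb^M}(f(z),f(w)) \geq s + t - \log(2C) + O(1)$, which is much stronger than $|t-s| - \beta$ away from the diagonal; near the diagonal one instead uses the trivial bound $\dist_{\Bb^M}(f(z),f(w)) \geq 0 \geq |t-s| - \beta$ once $|t-s| \leq \beta$. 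A cleaner route, which I would actually write up, is to compare $f\circ\sigma_v$ with the constant-speed reparametrization and use that $\dist_{\Bb^M}(0, f(\sigma_v(t))) \geq t - \beta'$ for an appropriate $\beta'$, obtained exactly as above from~\eqref{eqn:distance in ball} with $w = 0$ and the triangle inequality $\dist_{\Bb^M}(0, f(0)) + \dist_{\Bb^M}(f(0), f\sigma_v(t)) \geq \dist_{\Bb^M}(0, f\sigma_v(t))$; then for general $s \le t$, project to the ray through $f(0)$ — but since Gromov hyperbolicity is not yet needed here, the direct two-point computation is simplest.

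The main obstacle is purely bookkeeping: tracking the $o(1)$ and $O(1)$ error terms coming from $\cosh^{-1}\sqrt{x} = \tfrac12\log x + o(1)$ as $x \to \infty$ and from $\tfrac12 \log\frac{1+\tanh s}{2} \to 0$, and making sure the resulting additive constant is exactly $\beta = \tfrac12\log(2C) + \dist_{\Bb^M}(0, f(0))$ as claimed rather than merely some constant. I expect the term $\dist_{\Bb^M}(0,f(0))$ in $\beta$ enters precisely through one application of the triangle inequality to move the basepoint from $f(0)$ to $0$, and the $\tfrac12\log(2C)$ through the boundary-distortion estimate~\eqref{eqn:distance to boundary}; the remaining $O(1)$ terms should in fact be nonpositive after the elementary inequalities $1 - \norm{f(z)}^2 \le 2(1-\norm{f(z)})$ and $\tanh(s) \le 1$ are used in the right direction, so that the clean constant $\beta$ suffices with no slack to spare.
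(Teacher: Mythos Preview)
Your proposal has all the ingredients but does not assemble them cleanly, and the route you ultimately favor (the ``direct two-point computation'') does not work as stated. The paper's proof is precisely what you sketch as the ``cleaner route'' and then set aside: apply the triangle inequality
\[
\dist_{\Bb^M}\big(\hat\sigma_v(s),\hat\sigma_v(t)\big)\ \ge\ \dist_{\Bb^M}\big(0,\hat\sigma_v(t)\big)-\dist_{\Bb^M}\big(0,\hat\sigma_v(s)\big),
\]
bound the first term from below by $t-\tfrac12\log(2C)$ using the explicit formula $\dist_{\Bb^M}(0,p)=\tfrac12\log\tfrac{1+\|p\|}{1-\|p\|}$ together with~\eqref{eqn:distance to boundary}, and bound the second term from above by $s+\dist_{\Bb^M}(0,f(0))$ using one triangle inequality and the distance-decreasing property. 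This yields $(t-s)-\beta$ on the nose, with no $o(1)$ or $O(1)$ terms to track.

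The direct approach has a genuine gap. Your claimed inequality $\dist_{\Bb^M}(f(z),f(w))\ge s+t-\log(2C)+O(1)$ is false: take $s=t$ (or $|s-t|$ bounded) and let $s\to\infty$; the left side is $\le|t-s|$ while the right side diverges. The underlying problem is that bounding $|1-\ip{f(z),f(w)}|$ from below by something like $1-\|f(z)\|$ puts a \emph{small} quantity in the numerator, so to salvage the estimate you would still need a \emph{lower} bound on $1-\|f(z)\|^2$. The Lipschitz hypothesis~\eqref{eqn:distance to boundary} only gives an upper bound; the lower bound comes from the distance-decreasing property plus the basepoint shift $f(0)\to 0$, which is exactly the triangle-inequality argument you were trying to avoid. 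So the ``direct'' computation is not simpler and, once repaired, collapses to the paper's proof.
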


\begin{proof} Fix $v \in \partial \Bb^m$ and fix $0 \leq s \leq t$. Let $\hat{\sigma}_v : = f \circ \sigma_v$. 

By the distance non-increasing property of the Kobayashi distance, we have 
$$
\dist_{\Bb^M}(\hat{\sigma}_v(s), \hat{\sigma}_v(t)) = \dist_{\Bb^m}(f(\sigma_v(t)), f(\sigma_v(s)))\leq  \dist_{\Bb^m}(\sigma_v(t), \sigma_v(s))=t-s. 
$$
By the triangle inequality, 
\begin{align*}
\dist_{\Bb^M}(\hat{\sigma}_v(s), \hat{\sigma}_v(t)) & \geq \dist_{\Bb^M}(0, \hat{\sigma}_v(t))-\dist_{\Bb^M}(0,\hat{\sigma}_v(s)).
\end{align*}
Further, using Equations~\eqref{eqn:distance in ball} and~\eqref{eqn:distance to boundary}
\begin{align*}
\dist_{\Bb^M}(0, \hat{\sigma}_v(t)) & = \frac{1}{2} \log \frac{1+\norm{\hat{\sigma}_v(t)}}{1-\norm{\hat{\sigma}_v(t)}} \geq \frac{1}{2} \log \frac{1}{C(1-\norm{\sigma_v(t)})} \\
& \geq \frac{1}{2} \log \frac{1+\norm{\sigma_v(t)}}{1-\norm{\sigma_v(t)}} -\frac{1}{2} \log(2C) =  \dist_{\Bb^m}(0,\sigma_v(t))-\frac{1}{2} \log(2C) \\
& = t - \frac{1}{2} \log(2C).
\end{align*}
Also,
\begin{align*}
\dist_{\Bb^M}(0, \hat{\sigma}_v(s)) & \leq \dist_{\Bb^M}(0, \hat{\sigma}_v(0)) + \dist_{\Bb^M}(\hat{\sigma}_v(0), \hat{\sigma}_v(s)) \\
& \leq \dist_{\Bb^M}(0,f(0)) +  \dist_{\Bb^m}(\sigma_v(0), \sigma_v(s))=\dist_{\Bb^M}(0,f(0))+s. 
\end{align*}
Hence,
\begin{align*}
\dist_{\Bb^M}(\hat{\sigma}_v(s), \hat{\sigma}_v(t)) & \geq (t-s) - \beta
\end{align*}
and $\hat{\sigma}_v$ is a $(1,\beta)$-quasi-geodesic. 
\end{proof} 

For $w \in \partial \Bb^M$, define $\gamma_w : [0,\infty) \rightarrow \Bb^M$ by $\gamma_w(t) = \tanh(t) w$. Then Equation~\eqref{eqn:distance in ball} implies that $\gamma_w$ is a geodesic in $(\Bb^M, \dist_{\Bb^M})$.

\begin{lemma} There exists $D > 0$ such that: if $v \in \partial \Bb^m$, then 
$$
\dist_{\Bb^M}^{\rm Haus}( \gamma_{f(v)}, f \circ \sigma_v) \leq D. 
$$
\end{lemma}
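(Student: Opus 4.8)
The plan is to reduce everything to the uniform pointwise bound
$$
\dist_{\Bb^M}\big(f(\tanh(T)v),\,\tanh(T)f(v)\big)\le R_1 \qquad (T\ge 0,\ v\in\partial\Bb^m),
$$
where $R_1$ depends only on the Lipschitz constant $L$ of $f$ and on $\dist_{\Bb^M}(0,f(0))$. Once this is in hand the lemma is immediate: $f\circ\sigma_v$ and $\gamma_{f(v)}$ are both parametrized by $t\in[0,\infty)$, and the bound says they are within $R_1$ of each other at each parameter $t$, so $\dist^{\rm Haus}_{\Bb^M}(\gamma_{f(v)},f\circ\sigma_v)\le R_1$. (If one prefers to use the Morse Lemma, the same bound controls the endpoints of the restrictions to $[0,T]$ — the distance is $\dist_{\Bb^M}(f(0),0)$ at parameter $0$ and $\le R_1$ at parameter $T$ — so, since by the previous lemma $f\circ\sigma_v$ is a $(1,\beta)$-quasi-geodesic and $\gamma_{f(v)}$ is a geodesic, Theorem~\ref{thm:morse lemma} gives a uniform Hausdorff bound on each $[0,T]$, and letting $T\to\infty$ finishes the proof.)

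To establish the pointwise bound I would set $a=\tanh(T)\in[0,1)$, $u=f(v)\in\partial\Bb^M$ (using that $f$ maps $\partial\Bb^m$ into $\partial\Bb^M$), and $w=f(av)\in\Bb^M$, and compute directly from~\eqref{eqn:distance in ball}:
$$
\cosh\dist_{\Bb^M}(w,au)=\frac{\abs{1-a\ip{w,u}}}{\sqrt{(1-\norm{w}^2)(1-a^2)}}.
$$
For the numerator, $\norm{u}=1$ gives $\abs{1-\ip{w,u}}=\abs{\ip{u-w,u}}\le\norm{u-w}=\norm{f(v)-f(av)}\le L(1-a)$, while $\abs{\ip{w,u}}\le\norm{w}\le 1$, so $\abs{1-a\ip{w,u}}\le(L+1)(1-a)$. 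For the denominator I would use $1-a^2\ge 1-a$ together with a \emph{lower} bound for $1-\norm{w}^2$: since holomorphic maps do not increase the Kobayashi distance, $\dist_{\Bb^M}(0,w)\le\dist_{\Bb^M}(0,f(0))+\dist_{\Bb^m}(0,av)$, and writing the distance to the origin in logarithmic form (as in the proof of the previous lemma) this rearranges to $1-\norm{w}\ge\frac{1}{2}e^{-2\dist_{\Bb^M}(0,f(0))}(1-a)$. Combining these, $\cosh\dist_{\Bb^M}(w,au)\le\sqrt{2}\,(L+1)\,e^{\dist_{\Bb^M}(0,f(0))}$, uniformly in $T$ and $v$, and one takes $R_1$ to be the inverse hyperbolic cosine of this quantity.

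The only step I expect to require any thought is the lower bound on $1-\norm{f(\tanh(T)v)}^2$. The Lipschitz hypothesis — equivalently~\eqref{eqn:distance to boundary} — only bounds how quickly $f$ can push points \emph{toward} $\partial\Bb^M$; to rule out that $f(\tanh(T)v)$ is drastically \emph{closer} to $\partial\Bb^M$ than $\tanh(T)v$ is to $\partial\Bb^m$, one really has to invoke the distance-decreasing property of holomorphic maps. The remaining manipulations are routine applications of the explicit distance formula~\eqref{eqn:distance in ball}.
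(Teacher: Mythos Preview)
Your argument is correct, and in fact it is more direct than the paper's. The paper proves this lemma via the Morse Lemma: it truncates $f\circ\sigma_v$ to $[0,n]$, compares it with the geodesic $\gamma_{w_n}|_{[0,b_n]}$ ending at $f(\sigma_v(n))$ (so the endpoint distances are $0$ and $\dist_{\Bb^M}(0,f(0))$), applies Theorem~\ref{thm:morse lemma}, and then lets $n\to\infty$ using $w_n\to f(v)$. Only \emph{afterwards} does the paper deduce the pointwise bound $\dist_{\Bb^M}(f(tv),tf(v))\le C$, which is Theorem~\ref{thm:radial lines}. You reverse the order: by a direct computation with the explicit distance formula~\eqref{eqn:distance in ball}, using the Lipschitz bound for the numerator and the Kobayashi contraction property for the denominator, you obtain the pointwise estimate first, and then the Hausdorff bound is immediate since the two curves are uniformly close at equal parameters. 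Your route is more elementary---it avoids Gromov hyperbolicity and the Morse Lemma altogether---and as a bonus it yields Theorem~\ref{thm:radial lines} in one stroke with an explicit constant. The paper's route, on the other hand, is more robust: the quasi-geodesic estimate plus the Morse Lemma would still work if $\Bb^M$ were replaced by a target domain whose Kobayashi metric is Gromov hyperbolic but lacks a closed-form distance formula.
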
 

\begin{proof} Let $D$ satisfy the Morse Lemma (Theorem~\ref{thm:morse lemma}) for $\Bb^M$ with parameters $\alpha =1$, $\beta$, and $R := \dist_{\Bb^M}(0, f(0))$. 

Fix $v \in \partial \Bb^m$. Since $t \mapsto (f\circ \sigma_v)(t)$ is a $(1,\beta)$-quasi-geodesic, 
\begin{align*}
\dist_{\Bb^M}(0,(f\circ\sigma_v)(t)) \geq t-\beta - \dist_{\Bb^M}(0,(f\circ\sigma_v)(0))=t-\beta - \dist_{\Bb^M}(0,f(0)).
\end{align*}
So, $(f\circ\sigma_v)(t) \neq 0$ when $t > T:=\beta+\dist_{\Bb^M}(0, f(0))$. For every natural number $n > T$, let 
$$
w_n : = \frac{f(\sigma_v(n))}{\norm{f(\sigma_v(n))}} \in \partial \Bb^M
$$
and let $b_n : = \tanh^{-1}(\norm{f(\sigma_v(n))})$. Then $\gamma_{w_n}(b_n) = (f \circ \sigma_v)(n)$ and 
$$
\dist_{\Bb^M}(\gamma_{w_n}(0), (f \circ \sigma_v)(0)) = \dist_{\Bb^M}(0, f(0)) = R. 
$$
So, by the Morse Lemma, 
$$
\dist_{\Bb^M}^{\rm Haus}\left( \gamma_{w_n}|_{[0,b_n]}, f \circ \sigma_v|_{[0,n]}\right) \leq D. 
$$
Since $w_n \rightarrow f(v)$ and $b_n \rightarrow \infty$, sending $n \rightarrow \infty$ implies that 
\begin{equation*}
\dist_{\Bb^M}^{\rm Haus}( \gamma_{f(v)}, f \circ \sigma_v) \leq D. \qedhere
\end{equation*}
\end{proof} 

We are now ready to complete the proof of Theorem~\ref{thm:radial lines}. 

\begin{lemma} If $v \in \partial \Bb^m$, then 
$$
\dist_{\Bb^M}\big( f(t v), t f(v)\big) \leq 2D  + \beta + \dist_{\Bb^M}(0, f(0))
$$
for all $t \in [0,1)$. 
\end{lemma}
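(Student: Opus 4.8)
The plan is to reparametrize both curves by arc length and reduce the statement to comparing $f\circ\sigma_v$ and $\gamma_{f(v)}$ at the \emph{same} parameter value, exploiting the two facts already established in this subsection: that $f\circ\sigma_v$ is a $(1,\beta)$-quasi-geodesic, and that $\dist_{\Bb^M}^{\rm Haus}(\gamma_{f(v)}, f\circ\sigma_v) \leq D$. Concretely, given $t \in [0,1)$, write $t = \tanh(s)$ with $s \in [0,\infty)$; then $f(tv) = (f\circ\sigma_v)(s)$ and $t f(v) = \gamma_{f(v)}(s)$, so it suffices to bound $\dist_{\Bb^M}\big((f\circ\sigma_v)(s), \gamma_{f(v)}(s)\big)$ uniformly in $s \geq 0$ (and, since $D$ and $\beta$ were chosen independently of $v$, uniformly in $v$).

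The main point is to upgrade the Hausdorff (set-level) closeness supplied by the Morse Lemma to pointwise closeness of the two arc-length parametrizations. By the Hausdorff bound, for each $s$ there is some $s' \geq 0$ with $\dist_{\Bb^M}\big((f\circ\sigma_v)(s), \gamma_{f(v)}(s')\big) \leq D$. To control $\abs{s-s'}$ I would compare distances to the basepoint $0$: since $\gamma_{f(v)}$ is a unit-speed geodesic with $\gamma_{f(v)}(0) = 0$, we have $\dist_{\Bb^M}(0, \gamma_{f(v)}(s')) = s'$; and since $f\circ\sigma_v$ is a $(1,\beta)$-quasi-geodesic with $(f\circ\sigma_v)(0) = f(0)$, two applications of the triangle inequality give $\abs{\dist_{\Bb^M}(0, (f\circ\sigma_v)(s)) - s} \leq \beta + \dist_{\Bb^M}(0, f(0))$. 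Then
\[
\abs{s-s'} \leq \abs{\dist_{\Bb^M}(0,\gamma_{f(v)}(s')) - \dist_{\Bb^M}(0,(f\circ\sigma_v)(s))} + \abs{\dist_{\Bb^M}(0,(f\circ\sigma_v)(s)) - s} \leq D + \beta + \dist_{\Bb^M}(0,f(0)),
\]
where the first term is bounded by $\dist_{\Bb^M}\big((f\circ\sigma_v)(s), \gamma_{f(v)}(s')\big) \leq D$.

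Finally, one more triangle inequality finishes the argument:
\[
\dist_{\Bb^M}\big((f\circ\sigma_v)(s), \gamma_{f(v)}(s)\big) \leq \dist_{\Bb^M}\big((f\circ\sigma_v)(s), \gamma_{f(v)}(s')\big) + \dist_{\Bb^M}\big(\gamma_{f(v)}(s'), \gamma_{f(v)}(s)\big) \leq D + \abs{s-s'},
\]
since $\gamma_{f(v)}$ is a geodesic, so the middle displayed distance equals $\abs{s-s'}$. Substituting the bound on $\abs{s-s'}$ yields $\dist_{\Bb^M}(f(tv), tf(v)) \leq 2D + \beta + \dist_{\Bb^M}(0, f(0))$, as claimed. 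I expect the only delicate step to be the parameter-matching in the middle paragraph — the Morse Lemma by itself controls the curves only as sets, not as parametrized paths — while everything else is a direct application of results already in hand.
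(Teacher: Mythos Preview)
Your proposal is correct and follows essentially the same route as the paper: reparametrize by $t=\tanh(s)$, use the Hausdorff bound to find $s'$ with $\dist_{\Bb^M}\big((f\circ\sigma_v)(s),\gamma_{f(v)}(s')\big)\le D$, bound $\abs{s-s'}$ by comparing distances to the basepoint via the $(1,\beta)$-quasi-geodesic property, and finish with the triangle inequality along the geodesic $\gamma_{f(v)}$. The only cosmetic difference is that the paper routes the reverse triangle inequality through $f(0)$ rather than $0$ when estimating $\abs{s-s'}$, but the resulting bound is identical.
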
 

\begin{proof} Fix $s \geq 0$ such that $\tanh(s) = t$. By the previous lemma, there exists $s^\prime \geq 0$ such that 
$$
\dist_{\Bb^M}(\gamma_{f(v)}(s^\prime), (f \circ \sigma_v)(s)) \leq D. 
$$
Further, 
\begin{align*}
\dist_{\Bb^M} & \left(\gamma_{f(v)}(s^\prime), (f \circ \sigma_v)(s)\right) \\ 
& \geq  -\dist_{\Bb^M}(0, f(0))+ \abs{ \dist_{\Bb^M}(\gamma_{f(v)}(s^\prime), 0)-\dist_{\Bb^M}(f(0), (f \circ \sigma_v)(s))}\\
& \geq  -\dist_{\Bb^M}(0, f(0))-\beta + \abs{s^\prime - s}
\end{align*}
since $\gamma_{f(v)}$ is a geodesic and $f \circ \sigma_v$ is an $(1,\beta)$-quasi-geodesic. So, 
$$
\abs{s^\prime - s} \leq D +\beta + \dist_{\Bb^M}(0, f(0)).
$$
Then 
\begin{align*}
\dist_{\Bb^M}\big( f(t v), t f(v)\big) & =\dist_{\Bb^M}\big( (f \circ\sigma_v)(s), \gamma_{f(v)}(s) \big)  \\
& \leq \dist_{\Bb^M}\big( (f \circ\sigma_v)(s), \gamma_{f(v)}(s^\prime) \big)+\dist_{\Bb^M}\big( \gamma_{f(v)}(s^\prime), \gamma_{f(v)}(s) \big) \\
& \leq D + \abs{s^\prime-s} \leq 2D  + \beta + \dist_{\Bb^M}(0, f(0)).
\end{align*}

\end{proof} 

\section{The proof of Theorem~\ref{thm:main}} 

For the rest of this section, suppose that $f: \Bb^m \rightarrow \Bb^M$ is a proper holomorphic map where 
\begin{enumerate}
    \item $f$ extends to a $\Cc^2$-smooth map $\overline{\Bb^m} \rightarrow \overline{\Bb^M}$, 
    \item $\mathsf{G}_f$ is non-compact. 
\end{enumerate}

\begin{remark} Notice that each element of $\Aut(\Bb^m)$ (respectively, $\Aut(\Bb^M)$) extends to a smooth map on $\overline{\Bb^m}$ (respectively, $\overline{\Bb^M}$). Further, if $\psi\in \Aut(\Bb^M)$ and $\phi \in \Aut(\Bb^m)$, then $\mathsf{G}_f$ is non-compact if and only if $\mathsf{G}_{\psi \circ f \circ \phi}$ is non-compact. So throughout the argument, we are allowed to replace $f$ by a map of the form $\psi \circ f \circ \varphi$, where $\psi\in \Aut(\Bb^M)$ and $\phi \in \Aut(\Bb^m)$.
\end{remark}

\begin{lemma} Without loss of generality, we may assume that $f(0)=0$. \end{lemma}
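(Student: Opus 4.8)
The plan is to use the non-compactness of $\mathsf{G}_f$ to produce an automorphism of $\Bb^M$ carrying $f(0)$ to $0$, and then absorb it by the observation in the preceding remark. Concretely, since $\mathsf{G}_f$ is non-compact, there is a sequence $(\phi_n, \psi_n) \in \mathsf{G}_f$ with no convergent subsequence. First I would argue that, after passing to a subsequence, the sequence $\{\phi_n(0)\}$ must leave every compact subset of $\Bb^m$: if it stayed in a compact set, then (using that $\Aut(\Bb^m) = \PU(m,1)$ acts properly on $\Bb^m$, equivalently $\U(m) = \Stab(0)$ is compact and the orbit map is proper) the sequence $\{\phi_n\}$ would itself subconverge in $\Aut(\Bb^m)$. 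In that case the relation $\psi_n = f \circ \phi_n \circ f^{-1}$ on the (nonempty, open) image... more carefully: from $\psi_n \circ f = f \circ \phi_n$ and convergence of $\phi_n \to \phi \in \Aut(\Bb^m)$, one gets that $\psi_n \circ f \to f \circ \phi$ locally uniformly, so $\psi_n$ converges locally uniformly on the open set $f(\Bb^m)$; since a sequence in $\Aut(\Bb^M)$ that converges locally uniformly on a nonempty open set converges in $\Aut(\Bb^M)$ (again by properness of the $\PU(M,1)$-action, or by normal families plus the fact that the limit is a proper holomorphic self-map hence an automorphism), this would make $\{(\phi_n,\psi_n)\}$ subconvergent in $\mathsf{G}_f$, contradicting non-compactness. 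Hence $\phi_n(0) \to \infty$ in $\Bb^m$, i.e. $\norm{\phi_n(0)} \to 1$, and after a further subsequence $\phi_n(0) \to \xi \in \partial \Bb^m$.

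Next I would transfer this to the target. Since $f$ extends continuously to $\overline{\Bb^m}$ with $f(\partial \Bb^m) \subset \partial \Bb^M$, we have $f(\phi_n(0)) \to f(\xi) \in \partial \Bb^M$; but $f(\phi_n(0)) = \psi_n(f(0))$, so $\psi_n(f(0)) \to f(\xi) \in \partial \Bb^M$, which in particular shows $\norm{\psi_n(f(0))} \to 1$. Now choose $\varphi \in \Aut(\Bb^m)$ with $\varphi(0) = \xi'$ for a suitable point, and more to the point choose automorphisms normalizing things: let $\alpha_n \in \Aut(\Bb^m)$ be such that $\alpha_n(0) = \phi_n(0)$ and let $\beta_n \in \Aut(\Bb^M)$ be such that $\beta_n(0) = \psi_n(f(0))$. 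The cleanest route, though, is simpler: just pick a \emph{single} automorphism. Since $f(0) \in \Bb^M$, there is $\varphi_2^0 \in \Aut(\Bb^M)$ with $\varphi_2^0(f(0)) = 0$; replacing $f$ by $\varphi_2^0 \circ f$ (legitimate by the remark, as it preserves non-compactness of $\mathsf{G}_f$ and the $\Cc^2$-boundary extension), the new map sends $0$ to $0$. Wait — this needs no non-compactness at all, so I should double-check the lemma's intent: indeed $\Aut(\Bb^M)$ acts transitively on $\Bb^M$, so $\varphi_2^0$ exists unconditionally, and $\mathsf{G}_{\varphi_2^0 \circ f}$ is conjugate (via $\psi \mapsto \varphi_2^0 \psi (\varphi_2^0)^{-1}$ in the second coordinate) to $\mathsf{G}_f$, hence non-compact; and $\varphi_2^0$ is a biholomorphism of $\Cb^M$, in particular smooth on $\overline{\Bb^M}$, so the $\Cc^2$ extension property persists.

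So the proof is short: \textbf{Step 1.} By transitivity of $\Aut(\Bb^M)$ on $\Bb^M$, choose $\varphi_2^0 \in \Aut(\Bb^M)$ with $\varphi_2^0(f(0)) = 0$. \textbf{Step 2.} Set $\tilde f := \varphi_2^0 \circ f$; then $\tilde f(0) = 0$, $\tilde f$ is proper holomorphic, and since $\varphi_2^0$ extends holomorphically past $\overline{\Bb^M}$, the map $\tilde f$ still extends to a $\Cc^2$-smooth map $\overline{\Bb^m} \to \overline{\Bb^M}$. \textbf{Step 3.} As noted in the remark, $\mathsf{G}_{\tilde f}$ is non-compact iff $\mathsf{G}_f$ is, via $(\phi,\psi) \mapsto (\phi, \varphi_2^0 \circ \psi \circ (\varphi_2^0)^{-1})$, which is an isomorphism of topological groups. \textbf{Step 4.} If we prove Theorem~\ref{thm:main} for $\tilde f$, obtaining $\varphi_1, \varphi_2$ with $\varphi_2 \circ \tilde f \circ \varphi_1(z) = (z,0)$, then $(\varphi_2 \circ \varphi_2^0) \circ f \circ \varphi_1(z) = (z,0)$, giving the conclusion for $f$. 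The only mild subtlety — and the one point I would write out carefully rather than wave at — is Step 2's claim that pre/post-composition by a ball automorphism preserves the $\Cc^2$ boundary regularity; this is immediate because elements of $\Aut(\Bb^M) = \PU(M,1)$ are restrictions of fractional-linear maps whose denominator $c^T z + d$ is nonvanishing on $\overline{\Bb^M}$, hence real-analytic (a fortiori $\Cc^2$) on a neighbourhood of $\overline{\Bb^M}$.
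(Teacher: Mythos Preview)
Your proposal is correct and, once you abandon the unnecessary detour through non-compactness (which you rightly catch mid-argument), it is exactly the paper's proof: pick $\varphi \in \Aut(\Bb^M)$ with $\varphi(f(0))=0$ by transitivity, and replace $f$ by $\varphi \circ f$, invoking the preceding remark. Your Steps~1--4 spell out in more detail what the paper compresses into two lines.
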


\begin{proof} Since $\Aut(\Bb^M)$ acts transitively on $\Bb^M$, there exists $\varphi \in \Aut(\Bb^M)$ with $\varphi(f(0))=0$. Then replace $f$ by $\varphi \circ f$. \end{proof}

Since $\mathsf{G}_f$ is non-compact, there exists a sequence $\{(\phi_n, \psi_n)\}$ in $\mathsf{G}_f$ with no convergent subsequence.

We observe that the sequences $\{ \phi_n(0)\}$ and $\{ \psi_n(0)\}$ escape to the boundary. 

\begin{lemma}\label{lem:sequences escape to boundary} $\lim_{n \rightarrow \infty} \abs{\phi_n(0)} = 1 = \lim_{n \rightarrow \infty} \abs{\psi_n(0)}$.
\end{lemma}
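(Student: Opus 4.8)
The statement to prove is Lemma~\ref{lem:sequences escape to boundary}: that $\abs{\phi_n(0)} \to 1$ and $\abs{\psi_n(0)} \to 1$. The plan is to argue by contradiction, extracting convergent subsequences and using the compatibility relation $\psi_n \circ f = f \circ \phi_n$ together with the fact that $f(0) = 0$.

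First I would recall the basic structure of $\Aut(\Bb^m)$. Using the Cartan (or $KAK$) decomposition in $\PU(m,1)$, any automorphism $\phi$ of $\Bb^m$ can be written as $\phi = k_1 a_{t} k_2$ with $k_1, k_2 \in \U(m)$ and $t \geq 0$; since $\U(m)$ fixes $0$, we get $\abs{\phi(0)} = \abs{a_t(0)} = \tanh(t)$, and the distance formula~\eqref{eqn:distance in ball} gives $\dist_{\Bb^m}(0, \phi(0)) = t$. Thus $\abs{\phi_n(0)} \to 1$ is equivalent to $\dist_{\Bb^m}(0, \phi_n(0)) \to \infty$, and similarly for $\psi_n$. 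So it suffices to show $\dist_{\Bb^m}(0,\phi_n(0)) \to \infty$ and $\dist_{\Bb^M}(0, \psi_n(0)) \to \infty$.

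For the first: suppose not, so after passing to a subsequence $\dist_{\Bb^m}(0, \phi_n(0))$ is bounded, i.e.\ $\phi_n(0)$ stays in a compact subset of $\Bb^m$. Writing $\phi_n = k_n a_{t_n} k_n'$ with $t_n$ bounded, compactness of $\U(m) \times [0,T] \times \U(m)$ lets us pass to a further subsequence along which $\phi_n$ converges in $\Aut(\Bb^m)$. Now the relation $\psi_n = f \circ \phi_n \circ f^{-1}$ does not literally make sense ($f$ need not be invertible), but the weaker relation $\psi_n(f(z)) = f(\phi_n(z))$ does: evaluating at $z = 0$ gives $\psi_n(0) = \psi_n(f(0)) = f(\phi_n(0))$, so $\psi_n(0) = f(\phi_n(0))$ converges to $f(\phi_\infty(0)) \in \Bb^M$, hence $\{\psi_n(0)\}$ stays in a compact subset of $\Bb^M$, so $\{\psi_n\}$ (again via $KAK$) has a convergent subsequence. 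But then $\{(\phi_n,\psi_n)\}$ has a convergent subsequence in $\Aut(\Bb^m) \times \Aut(\Bb^M)$, and since $\mathsf{G}_f$ is closed in that product (the defining relation $\psi\circ f = f\circ \phi$ passes to limits by continuity), the limit lies in $\mathsf{G}_f$ — contradicting our choice of $\{(\phi_n,\psi_n)\}$ with no convergent subsequence in $\mathsf{G}_f$. Hence $\abs{\phi_n(0)} \to 1$.

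For the second claim, observe that we have already shown $\psi_n(0) = f(\phi_n(0))$, so $\abs{\psi_n(0)} = \norm{f(\phi_n(0))}$. Since $\abs{\phi_n(0)} \to 1$, i.e.\ $1 - \norm{\phi_n(0)} \to 0$, and $f$ is proper with $f(\partial\Bb^m) \subset \partial\Bb^M$, properness of $f$ (equivalently: $\norm{f(z)} \to 1$ as $\norm{z}\to 1$, which follows from $f$ extending continuously to $\overline{\Bb^m}$ with boundary values in $\partial\Bb^M$, or directly from estimate~\eqref{eqn:distance to boundary} read the other way using properness) forces $\norm{f(\phi_n(0))} \to 1$. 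Therefore $\abs{\psi_n(0)} \to 1$ as well. The only delicate point is the one-sided estimate ensuring $\norm{f(z)} \to 1$ as $\norm z \to 1$; this is immediate from $f$ being a proper map, since preimages of compact sets are compact, so $f^{-1}\big(\overline{B(0,r)}\big)$ is a compact subset of $\Bb^m$ for each $r<1$, whence $\norm{f(z)} > r$ once $z$ is close enough to $\partial\Bb^m$.

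**Main obstacle.** The one genuinely careful step is handling the fact that $f$ is not invertible, so $\psi_n$ is not simply a conjugate of $\phi_n$; the trick is that evaluation of the relation $\psi_n \circ f = f \circ \phi_n$ at the single point $0$ (using $f(0)=0$) already pins down $\psi_n(0)$ in terms of $\phi_n(0)$, which is all that is needed. After that, the argument is a routine combination of the $KAK$ decomposition, closedness of $\mathsf{G}_f$ in $\Aut(\Bb^m)\times\Aut(\Bb^M)$, and properness of $f$.
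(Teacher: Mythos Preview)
Your proposal is correct and follows essentially the same argument as the paper: contradict precompactness of $\{\phi_n(0)\}$, use the relation $\psi_n(0)=f(\phi_n(0))$ (from $f(0)=0$) to force precompactness of $\{\psi_n(0)\}$, extract a convergent subsequence in $\mathsf{G}_f$, and then deduce $\abs{\psi_n(0)}\to 1$ from properness of $f$. The only cosmetic difference is that the paper invokes properness of the $\Aut(\Bb^m)$-action on $\Bb^m$ to extract convergent subsequences, whereas you do this explicitly via the $KAK$ decomposition; these are equivalent, and your added remark that $\mathsf{G}_f$ is closed in $\Aut(\Bb^m)\times\Aut(\Bb^M)$ makes explicit a point the paper leaves implicit.
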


\begin{proof} Assume for a contradiction that $\lim_{n \rightarrow \infty} \abs{\phi_n(0)} \neq 1$. Then there exists a subsequence ${\phi_{n_j}(0)}$ converging to some $z\in \Bb^m$. Then, since $\Aut(\Bb^m)$ acts properly on $\Bb^m$, we may pass to a further subsequence and assume that $\phi_{n_j}$ converges to some $\phi \in$ Aut$(\Bb^m)$. 

By the definition of $\mathsf{G}_f$, we then have  
$$
\lim_{j \rightarrow \infty} \psi_{n_j}(0)=  \lim_{j \rightarrow \infty} \psi_{n_j}(f(0))=\lim_{j \rightarrow \infty} f(\phi_{n_j}(0)) = f(z) \in \Bb^M.
$$
Then we may pass to a further subsequence and assume that $\psi_{n_j}$ converges to some $\psi \in \Aut(\Bb^M)$. This gives us a convergent subsequence $(\phi_{n_j}, \psi_{n_j}) \rightarrow (\phi,\psi)$, contradicting the assumption that $\{(\phi_n, \psi_n)\}$ has no convergent subsequence.

Therefore, $\lim_{n \rightarrow \infty} \abs{\phi_n(0)} =1$ and so
$$
\lim_{n \rightarrow \infty} \abs{\psi_n(0)} = \lim_{n \rightarrow \infty} \abs{f(\phi_n(0))} = 1,
$$
since $f$ is proper and $\psi_n(0)=\psi_n(f(0)) = f(\phi_n(0))$.
\end{proof}

Let $e_1,\dots, e_m$ denote the standard basis of $\Cb^m$ and let $e_1^\prime, \dots, e_M^\prime$ denote the standard basis of $\Cb^M$.

\begin{lemma} Without loss of generality, we may assume that 
$$
\lim_{n \rightarrow \infty} \phi_n(0) = e_1 \quad \text{and} \quad \lim_{n \rightarrow \infty} \psi_n(0) = e_1^\prime.
$$
\end{lemma}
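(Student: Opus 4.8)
The plan is to use the action of the unitary group $\U(m)$ on $\Bb^m$ to rotate the limiting direction of $\phi_n(0)$ to $e_1$, and similarly for $\psi_n(0)$, while respecting the constraint that we may only modify $f$ by pre- and post-composition with automorphisms. First I would pass to a subsequence so that $\phi_n(0) \to p \in \partial\Bb^m$ and $\psi_n(0) \to q \in \partial\Bb^M$; this is possible by compactness of $\overline{\Bb^m}$ and $\overline{\Bb^M}$, and by Lemma~\ref{lem:sequences escape to boundary} the limits $p,q$ indeed lie on the respective boundary spheres. Since $\U(m)$ acts transitively on $\partial\Bb^m$, choose $k \in \U(m) \leq \Aut(\Bb^m)$ with $k(p) = e_1$; since $\U(M)$ acts transitively on $\partial\Bb^M$, choose $k' \in \U(M) \leq \Aut(\Bb^M)$ with $k'(q) = e_1'$.

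Now replace $f$ by $k' \circ f \circ k^{-1}$. By the Remark preceding the lemmas, $\mathsf{G}_f$ non-compact is preserved, and moreover $\mathsf{G}_{k' \circ f \circ k^{-1}}$ contains the conjugated sequence $(k \circ \phi_n \circ k^{-1}, k' \circ \psi_n \circ (k')^{-1})$, which still has no convergent subsequence (conjugation by a fixed element is a homeomorphism of the automorphism group). Relabel this sequence as $(\phi_n, \psi_n)$ and relabel $k'\circ f \circ k^{-1}$ as $f$. The key point to check is that $f(0) = 0$ is preserved: indeed $k'(f(k^{-1}(0))) = k'(f(0)) = k'(0) = 0$ since elements of $\U(m)$, $\U(M)$ fix the origin. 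Then the new sequence satisfies
$$
\lim_{n\to\infty}\phi_n(0) = \lim_{n\to\infty} k(\phi_n^{\mathrm{old}}(k^{-1}(0))) = k\big(\lim_{n\to\infty}\phi_n^{\mathrm{old}}(0)\big) = k(p) = e_1,
$$
using that $k$ extends continuously to $\overline{\Bb^m}$ (it is linear), and similarly $\lim_{n\to\infty}\psi_n(0) = k'(q) = e_1'$.

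There is essentially no obstacle here; the only thing requiring a moment's care is bookkeeping — verifying that all three previously arranged normalizations ($f(0)=0$, non-compactness of $\mathsf{G}_f$, and the existence of the escaping sequence) survive simultaneously under the replacement $f \mapsto k' \circ f \circ k^{-1}$. Since $k, k'$ are unitary they fix the origin and extend smoothly (indeed linearly) to the closed balls, so each normalization is manifestly preserved, and the conjugated sequence witnesses the non-compactness as explained above. Passing to the subsequence along which $\phi_n(0) \to p$ and $\psi_n(0) \to q$ is harmless because a subsequence of a sequence with no convergent subsequence still has no convergent subsequence.
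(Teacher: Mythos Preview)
Your proof is correct and follows essentially the same approach as the paper: pass to a subsequence so that the limits exist on the boundary spheres, then conjugate by unitary rotations sending those limit points to $e_1$ and $e_1'$. You have simply added explicit verification that the earlier normalizations ($f(0)=0$, non-compactness, the escaping sequence) survive the replacement, details the paper leaves implicit.
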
 

\begin{proof} Passing to a subsequence, we can suppose that the limits 
$$
x:=\lim_{n \rightarrow \infty} \phi_n(0) \quad \text{and} \quad y:=\lim_{n \rightarrow \infty} \psi_n(0)
$$
both exist. Then we can fix rotations $\varphi_1 \in \mathsf{U}(m)$ and $\varphi_2 \in \mathsf{U}(M)$ such that $\varphi_1(x) =e_1$ and $\varphi_2(y) = e_1^\prime$. Then replace $f$ with $\varphi_2 \circ f \circ \varphi_1^{-1}$ and $\{ (\phi_n, \psi_n)\}$ with  $\{ (\varphi_1 \phi_n \varphi_1^{-1}, \varphi_2 \psi_n \varphi_2^{-1})\}$.  
\end{proof}

Fix $k_n \in \mathsf{U}(m)$ such that 
$$
k_n(e_1)= \frac{\phi_n(0)}{\norm{\phi_n(0)}}
$$
and let $t_n: = \tanh^{-1}( \norm{\phi_n(0)})$. Then $k_n a_{t_n}(0) = \phi_n(0)$ and so  
$$
a_{-t_n}k_n^{-1}\phi_n(0) =0.
$$  
Since $\Aut(\Bb^m)$ acts properly on $\Bb^m$, we may pass to a subsequence and assume that 
$$
\alpha_n:=a_{-t_n}k_n^{-1}\phi_n \rightarrow \alpha \in \Aut(\Bb^m).
$$
Next, fix $\ell_n \in \mathsf{U}(M)$ such that 
$$
\ell_n(e_1^\prime) = f\left( \frac{\phi_n(0)}{\norm{\phi_n(0)}}\right). 
$$

\begin{lemma} The sequence $\{ a_{-t_n} \ell_n^{-1} \psi_n(0)\}$ is relatively compact in $\Bb^M$. Hence, after passing to a subsequence, we may assume that 
$$
\beta_n:=a_{-t_n} \ell_n^{-1} \psi_n \rightarrow \beta \in \Aut(\Bb^M).
$$
\end{lemma}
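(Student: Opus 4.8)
The plan is to reduce the statement to Theorem~\ref{thm:radial lines} together with the fact that every automorphism of a ball acts as an isometry for the Kobayashi distance. Everything else is bookkeeping.

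First I would record the basic identity coming from $f(0)=0$ and $(\phi_n,\psi_n)\in\mathsf{G}_f$, namely
$$
\psi_n(0)=\psi_n(f(0))=f(\phi_n(0)).
$$
Writing $v_n:=\phi_n(0)/\norm{\phi_n(0)}=k_n(e_1)\in\partial\Bb^m$, we have $\phi_n(0)=\tanh(t_n)\,v_n$, and by the choice of $\ell_n$ we have $f(v_n)=\ell_n(e_1^\prime)$. Hence
$$
\tanh(t_n)f(v_n)=\ell_n\big(\tanh(t_n)e_1^\prime\big)=\ell_n\big(a_{t_n}(0)\big),
$$
where now $a_{t_n}$ denotes the corresponding Cartan element of $\Aut(\Bb^M)$.

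Next I would apply Theorem~\ref{thm:radial lines} to the unit vector $v_n$ with parameter $t=\tanh(t_n)$: this produces a constant $C>0$, independent of $n$, with
$$
\dist_{\Bb^M}\big(f(\phi_n(0)),\,\ell_n(a_{t_n}(0))\big)=\dist_{\Bb^M}\big(f(\tanh(t_n)v_n),\,\tanh(t_n)f(v_n)\big)\leq C.
$$
Since $a_{-t_n}\ell_n^{-1}\in\Aut(\Bb^M)$ is an isometry of $(\Bb^M,\dist_{\Bb^M})$, applying it to both arguments and using $\psi_n(0)=f(\phi_n(0))$ gives
$$
\dist_{\Bb^M}\big(a_{-t_n}\ell_n^{-1}\psi_n(0),\,0\big)\leq C.
$$
Thus $\{a_{-t_n}\ell_n^{-1}\psi_n(0)\}$ lies in the closed Kobayashi ball of radius $C$ about $0$, which (by the explicit formula~\eqref{eqn:distance in ball}, since $\dist_{\Bb^M}(0,\cdot)\to\infty$ at the boundary) is a compact subset of $\Bb^M$; this is the first assertion. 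For the second, since $\Aut(\Bb^M)$ acts properly on $\Bb^M$, the relative compactness of $\{\beta_n(0)\}=\{a_{-t_n}\ell_n^{-1}\psi_n(0)\}$ forces $\{\beta_n\}$ to be relatively compact in $\Aut(\Bb^M)$, so after passing to a subsequence $\beta_n\to\beta\in\Aut(\Bb^M)$.

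I do not expect a genuine obstacle here; the geometric content is entirely contained in Theorem~\ref{thm:radial lines}. The only points needing care are: that $\ell_n$ is well defined because $f$ carries $\partial\Bb^m$ into $\partial\Bb^M$, so $f(v_n)$ is a genuine unit vector and $\U(M)$ acts transitively on such vectors; that in $\ell_n(a_{t_n}(0))$ the symbol $a_{t_n}$ is read inside $\Aut(\Bb^M)$ rather than $\Aut(\Bb^m)$; and that the constant $C$ furnished by Theorem~\ref{thm:radial lines} is uniform over $v\in\partial\Bb^m$, which is precisely what that theorem asserts.
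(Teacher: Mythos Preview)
Your proof is correct and follows essentially the same route as the paper: identify $\psi_n(0)=f(\tanh(t_n)v_n)$ and $\ell_n a_{t_n}(0)=\tanh(t_n)f(v_n)$, apply Theorem~\ref{thm:radial lines}, and use that $a_{-t_n}\ell_n^{-1}$ is a Kobayashi isometry to bound $\dist_{\Bb^M}(\beta_n(0),0)$ uniformly. Your added remarks on compactness of the Kobayashi ball and properness of the $\Aut(\Bb^M)$-action just make explicit what the paper leaves implicit.
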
 

\begin{proof} By Theorem~\ref{thm:radial lines}, there exists $C > 0$ such that: if $v \in \partial \Bb^m$, then 
$$
\dist_{\Bb^M}\big( f(t v), t f(v)\big) \leq C
$$
for all $t \in [0,1)$. 

Let $v_n : = \frac{\phi_n(0)}{\norm{\phi_n(0)}} \in \partial \Bb^m$. Notice that 
$$
\ell_n a_{t_n}(0) = \ell_n (\tanh(t_n)e_1^\prime)  = \tanh(t_n) f(v_n)
$$
and $\psi_n(0) = f(\phi_n(0)) = f( \tanh(t_n) v_n)$. So 
\begin{align*}
 \dist_{\Bb^M}& \big( a_{-t_n} \ell_n^{-1} \psi_n(0),0) = \dist_{\Bb^M}\big( \psi_n(0), \ell_n a_{t_n}(0) ) \\
 & = \dist_{\Bb^M}\big( f( \tanh(t_n) v_n), \tanh(t_n) f(v_n)) \leq C. 
\end{align*} 
Hence, $\{ a_{-t_n} \ell_n^{-1} \psi_n(0)\}$ is relatively compact in $\Bb^M$.
\end{proof}

Next, recall that $\psi_n^{-1} \circ f \circ \phi_n = f$ and so
$$
a_{-t_n} \circ (\ell_n^{-1} \circ f \circ k_n) \circ a_{t_n} = \beta_n \circ f \circ \alpha_n^{-1} 
$$
for all $n \geq 1$. Let 
$$
h_n : =\ell_n^{-1} \circ f \circ k_n \quad \text{and} \quad g_n : = \beta_n \circ f \circ \alpha_n^{-1}.
$$
Then 
\begin{equation}
a_{-t_n} \circ h_n \circ a_{t_n} = g_n.
\end{equation}
Passing to a subsequence, we can suppose that $\ell_n \rightarrow \ell \in \mathsf{U}(M)$ and $k_n \rightarrow k \in \mathsf{U}(m)$. Then 
$$
h_n \rightarrow h:= \ell^{-1} \circ f \circ k\quad \text{and} \quad g_n \rightarrow g:=\beta \circ f \circ \alpha^{-1}
$$
in $\Cc^2\left(\overline{\Bb^m}, \overline{\Bb^M}\right)$. 

\begin{lemma}\label{lem:value of e1} $g_n(e_1) = e_1^\prime$ and $h_n(e_1) = e_1^\prime$ for all $n \geq 1$. \end{lemma}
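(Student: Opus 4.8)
The plan is to derive both identities from one elementary observation: the point $e_1 \in \partial\Bb^m$ is fixed by every element of the Cartan subgroup $\mathsf{A} \leq \Aut(\Bb^m)$, and likewise $e_1^\prime \in \partial\Bb^M$ is fixed by every element of the analogous one-parameter subgroup of $\Aut(\Bb^M)$. To check this, one writes $a_t$ in the block form $\begin{bmatrix} A & b \\ c^T & d \end{bmatrix}$ and applies the defining action formula at $z = e_1$: one gets $Ae_1 + b = (\cosh t + \sinh t)e_1 = e^t e_1$ and $c^T e_1 + d = \sinh t + \cosh t = e^t$, so $a_t(e_1) = (e^t e_1)/e^t = e_1$; the identical computation in dimension $M$ gives $a_{-t}(e_1^\prime) = e_1^\prime$. (Equivalently, $e_1$ is the forward endpoint of the geodesic $t \mapsto a_t(0) = (\tanh t, 0, \dots, 0)$, hence is fixed by the geodesic flow.) One should also record that each $k \in \mathsf{U}(m)$ and $\ell \in \mathsf{U}(M)$ acts by a linear map that extends to the closed ball and preserves its boundary, and that $f$ extends continuously (indeed $\Cc^2$) to $\overline{\Bb^m}$ with $f(\partial\Bb^m) \subset \partial\Bb^M$ since it is proper; this makes the evaluations below at boundary points legitimate.

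For $h_n$, I would simply unwind the definitions: $h_n(e_1) = \ell_n^{-1}\big( f(k_n(e_1)) \big)$, where $k_n(e_1) = \phi_n(0)/\norm{\phi_n(0)} \in \partial\Bb^m$ by the choice of $k_n$, and the choice of $\ell_n$ then gives $f\big(\phi_n(0)/\norm{\phi_n(0)}\big) = \ell_n(e_1^\prime)$; hence $h_n(e_1) = \ell_n^{-1}\big(\ell_n(e_1^\prime)\big) = e_1^\prime$.

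For $g_n$, I would invoke the conjugation identity $a_{-t_n} \circ h_n \circ a_{t_n} = g_n$ established just above the lemma — here the inner $a_{t_n}$ lies in $\Aut(\Bb^m)$ and the outer $a_{-t_n}$ in $\Aut(\Bb^M)$ — and combine the two previous paragraphs: $g_n(e_1) = a_{-t_n}\big( h_n(a_{t_n}(e_1)) \big) = a_{-t_n}\big( h_n(e_1) \big) = a_{-t_n}(e_1^\prime) = e_1^\prime$.

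I do not expect any real obstacle: the content is entirely in the fixed-point observation (which is, in fact, exactly what the rescaling by $k_n, a_{t_n}, \ell_n$ was arranged to produce), and the only points needing a little care — keeping track of which ball each $a_{\pm t_n}$ acts on, and the legitimacy of evaluating $f$, $k_n$, $\ell_n$ at boundary points — were noted above.
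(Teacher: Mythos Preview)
Your proof is correct and follows essentially the same approach as the paper: both rely on the fact that $a_t$ fixes $e_1$ (respectively $e_1^\prime$) together with the defining choices of $k_n$ and $\ell_n$. The only cosmetic difference is the order---you establish $h_n(e_1)=e_1^\prime$ first and deduce $g_n(e_1)=e_1^\prime$ via the conjugation identity, whereas the paper argues $g_n(e_1)=e_1^\prime$ directly and then deduces the statement for $h_n$.
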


\begin{proof} Notice that 
$$
\alpha_n^{-1}(e_1) =k_na_{-t_n}(e_1)=k_n(e_1)= \frac{\phi_n(0)}{\norm{\phi_n(0)}}
$$
and 
$$
\beta_nf\left(\frac{\phi_n(0)}{\norm{\phi_n(0)}} \right) = a_{t_n} \ell_n^{-1}  f\left(\frac{\phi_n(0)}{\norm{\phi_n(0)}} \right) = a_{t_n} (e_1^\prime)=e_1^\prime. 
$$
So, $g_n(e_1) = e_1^\prime$. Then 
\begin{equation*}
h_n(e_1)= a_{t_n} g_n a_{-t_n}(e_1)=a_{t_n}g_n(e_1)=a_{t_n}(e_1^\prime)=e_1^\prime.  \qedhere
\end{equation*}
\end{proof}

We now will work in the parabolic model. In particular, for $j\in\{m,M\}$, let $F_j : \Bb^j \rightarrow \mathcal{P}^j$ denote the biholomorphism defined in Section~\ref{sec:reminders}. Then using these biholomorphisms, we can view $g,g_1,g_2,\dots, h, h_1, h_2, \dots$ as maps $\mathcal{P}^m \rightarrow \mathcal{P} ^M $. To complete the proof, it suffices to show that there exist $\varphi_1 \in \Aut(\Pc^M)$ and $\varphi_2 \in \Aut(\Pc^m)$ such that 
$$
\varphi_1 \circ g \circ \varphi_2(z) = (z,0). 
$$

Recall that $F_m(e_1) = 0$ and $F_M(e_1^\prime) = 0$ and so there exists a neighborhood $\mathcal{O}$ of $0$ in $\Cb^m$ such that the maps $g,g_1,g_2,\dots, h, h_1, h_2, \dots$ extend to $\Cc^2$ maps on $\mathcal{O} \cap \overline{\mathcal{P}^m}$ that map $0 \in \Cb^m$ to $0 \in \Cb^M.$

We further note that when viewed as elements of $\Aut(\mathcal{P}^j)$, we have
$$ 
    a_t(z_1,,\dots, z_m) = (e^{-t} z_1, e^{-\frac{t}{2}}z_2, \cdots  e^{-\frac{t}{2}}z_m)
$$
and 
  $$
    a_s(z_1,,\dots, z_M) = (e^{-s} z_1, e^{-\frac{s}{2}}z_2, \cdots  e^{-\frac{s}{2}}z_M)
  $$    
for all $s, t \in \Rb$, see Equation~\eqref{eqn:action of At on parabola model}.

Given a function $\varphi $ mapping into $\mathbb{C}^M$ and $1 \leq j \leq M$, let $[\varphi]_j$ denote the $j^{th}$ component function of $\varphi$.  Since $g_n$ and $h_n$ are $\Cc^2$ on $\mathcal{O} \cap \overline{\mathcal{P}^m}$, 
$$
[g_n]_j (z) = {[g_n]}_j(0) + \sum_{k=1}^m \frac{\partial [g_n]_j}{\partial z_k}(0)z_k +\sum_{k,\ell=1}^m \frac{\partial^2 [g_n]_j}{\partial z_k\partial z_\ell}(0)z_kz_\ell +[E_n]_j(z)
$$
and 
$$
[h_n]_j (z) = {[h_n]}_j(0) + \sum_{k=1}^m \frac{\partial [h_n]_j}{\partial z_k}(0)z_k + \sum_{k,\ell=1}^m \frac{\partial^2 [h_n]_j}{\partial z_k\partial z_\ell}(0)z_kz_\ell +[\hat{E}_n]_j(z)
$$
where
$$
\limsup_{z \to 0} \frac{E_n(z)}{\norm{z}^2} = 0
\quad\text{and}\quad
\limsup_{z \to 0} \frac{\hat{E}_n(z)}{\norm{z}^2} = 0.
$$

We observe the following uniformity in the error terms of the $h_n$. 

\begin{lemma}
    $$\limsup\limits_{z\rightarrow0}\left(\sup_{n\geq 1} \frac{\norm{\hat{E}_n(z)}}{\norm{z}^2}
    \right) = 0.$$
\end{lemma}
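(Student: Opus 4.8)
The plan is to obtain the uniform vanishing of the error terms $\hat E_n$ from $\Cc^2$-convergence $h_n \to h$ together with an equicontinuity-of-second-derivatives argument. Recall that $h_n = \ell_n^{-1}\circ f\circ k_n$ with $\ell_n \to \ell$ in $\mathsf{U}(M)$ and $k_n \to k$ in $\mathsf{U}(m)$, and $f$ extends to a $\Cc^2$-smooth map on $\overline{\Bb^m}$; transported to the parabolic model, the $h_n$ are $\Cc^2$ on $\mathcal{O}\cap\overline{\mathcal{P}^m}$ with uniformly bounded $\Cc^2$ norm on a slightly smaller neighborhood, and $h_n \to h$ in $\Cc^2$ on that neighborhood. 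The key observation is that $[\hat E_n]_j(z)$ is, by Taylor's theorem with integral (or Lagrange) remainder, controlled by the modulus of continuity of the second-order partial derivatives of $[h_n]_j$ near $0$: explicitly, $\norm{\hat E_n(z)} \leq \norm{z}^2 \sup_{\norm{w}\leq \norm{z}} \max_{j,k,\ell} \abs{\tfrac{\partial^2 [h_n]_j}{\partial z_k \partial z_\ell}(w) - \tfrac{\partial^2 [h_n]_j}{\partial z_k \partial z_\ell}(0)}$ up to a dimensional constant.

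Concretely, I would carry out the following steps. First, shrink $\mathcal{O}$ so that there is a fixed compact neighborhood $K$ of $0$ in $\overline{\mathcal P^m}$ on which all $h_n$ (and $h$) are $\Cc^2$; since $f$ is $\Cc^2$ on the compact set $\overline{\Bb^m}$ and $k_n \to k$, $\ell_n \to \ell$, the second derivatives of the $h_n$ are uniformly bounded on $K$ and in fact uniformly equicontinuous there (the second derivatives of $h_n$ are, after the chain rule, bilinear-in-$(\ell_n^{-1})$ combinations of the second derivatives of $f$ evaluated at $k_n(z)$, composed with the linear maps $k_n$; uniform continuity of $D^2 f$ on the compact set $\overline{\Bb^m}$ plus convergence of the unitaries yields a common modulus of continuity $\omega$, independent of $n$, with $\omega(0^+)=0$). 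Second, apply the Taylor remainder estimate above to get $\sup_{n\geq 1}\norm{\hat E_n(z)}/\norm{z}^2 \leq c_m\, \omega(\norm{z})$ for $z \in K$, where $c_m$ depends only on $m$. Third, let $z \to 0$: since $\omega(\norm{z}) \to 0$, the claimed $\limsup$ is $0$.

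The main obstacle, and the only point requiring care, is establishing the \emph{uniform} equicontinuity of the second derivatives of the $h_n$ — i.e.\ producing the single modulus $\omega$ valid for all $n$. This is where one must be slightly careful passing through the biholomorphisms $F_m, F_M$ (whose derivatives up to order two are bounded on a neighborhood of the relevant boundary points $e_1$, $e_1'$, which lie in the interior of the domain of smoothness, so this is harmless) and through the chain rule for the composition $\ell_n^{-1}\circ f \circ k_n$. The clean way to phrase it: each $D^2[h_n]$ is a fixed continuous function of the pair $(k_n, \ell_n) \in \mathsf{U}(m)\times\mathsf{U}(M)$ and of $D^2 f$, $Df$ restricted to $\overline{\Bb^m}$; the set $\{(k_n,\ell_n)\}\cup\{(k,\ell)\}$ is compact, $D f$ and $D^2 f$ are uniformly continuous on the compact set $\overline{\Bb^m}$, hence the family $\{D^2[h_n]\}_n$ is uniformly equicontinuous on $K$ by a standard $\varepsilon/3$ argument. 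Everything else is the routine Taylor estimate; I would not write out the multi-index bookkeeping in detail.
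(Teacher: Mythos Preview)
Your proposal is correct and follows the same approach as the paper: both argue that because $k_n\to k$ and $\ell_n\to\ell$ (the paper phrases this as $\Cc^\infty$-convergence of the conjugated rotations on a neighborhood of $0$ in the parabolic model), the family $\{h_n\}$ has uniformly controlled second-order behavior near $0$, whence the Taylor remainders vanish uniformly. The paper simply asserts that ``these facts imply'' the estimate, while you supply the mechanism---the integral Taylor remainder bounded by a common modulus of continuity of $D^2 h_n$, obtained via the chain rule from the uniform continuity of $D^2 f$ on the compact $\overline{\Bb^m}$ and the convergence of the unitaries---so your write-up is in fact more complete than the paper's.
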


\begin{proof}
 In the ball model, each $k_n$ acts by rotations and 
   $$
  k_n(e_1)= \frac{\phi_n(0)}{\norm{\phi_n(0)}} \rightarrow e_1.
  $$
Hence, in the parabolic model there is a neighborhood $\mathcal{O}^\prime$ of $0$ in $\overline{\Pc^m}$, where $k_n|_{\mathcal{O}^\prime} \rightarrow k|_{\mathcal{O}^\prime}$ in the $\Cc^\infty$ topology. Applying the same reasoning to $\{ \ell_n\}$, there is a neighborhood $\mathcal{O}^{\prime\prime}$ of $0$ in $\overline{\Pc^M}$ where $\ell_n|_{\mathcal{O}^{\prime\prime}} \rightarrow \ell|_{\mathcal{O}^{\prime\prime}}$ in the $\Cc^\infty$ topology. These facts imply that the error terms for $h_n  =\ell_n^{-1} \circ f \circ k_n $ satisfy the estimate claimed in the lemma. 
\end{proof} 

As a consequence of this uniformity in the error terms of the $h_n$, we next observe that $g$ is a quadratic polynomial. 

\begin{lemma} $g$ extends to a holomorphic function $\Cb^m \rightarrow \Cb^M$ and the component functions of $g$ are quadratic polynomials. 

\end{lemma}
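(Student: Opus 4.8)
The plan is to push the conjugation relation $g_n = a_{-t_n}\circ h_n\circ a_{t_n}$ through the $\Cc^2$ Taylor expansion of $h_n$ at $0$, exploiting the uniform control on the error terms $\hat E_n$ just established. Write $h_n = L_n + Q_n + \hat E_n$, where $L_n$ and $Q_n$ denote the (componentwise) linear and quadratic parts of the expansion; since $h_n \to h$ in $\Cc^2$, the coefficients of $L_n$ and $Q_n$ stay bounded in $n$. Recall also that $t_n = \tanh^{-1}(\norm{\phi_n(0)}) \to \infty$ by Lemma~\ref{lem:sequences escape to boundary}, and that in the parabolic model $a_{t_n}$ multiplies the first coordinate by $e^{-t_n}$ and the remaining coordinates by $e^{-t_n/2}$.

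The first and crucial step is the estimate
$$
\sup_{\norm{z}\le R}\ \norm{\,a_{-t_n}\,\hat E_n(a_{t_n}z)\,}\ \longrightarrow\ 0\qquad (n\to\infty)
$$
for every $R>0$. This follows from $\norm{a_{t_n}z}\le e^{-t_n/2}\norm{z}$, from $\norm{a_{-t_n}}_{\mathrm{op}}=e^{t_n}$, and from the bound $\norm{\hat E_n(w)}\le \eta(\norm{w})\norm{w}^2$ with $\eta(r)\to 0$ as $r\to 0$ \emph{independently of $n$}, supplied by the previous lemma: the two exponential factors cancel and the factor $\eta$ produces the decay. The point here — and the reason the conjugation is performed on $h_n$ rather than directly on $g_n$ — is that the dilation $a_{-t_n}$ has operator norm $e^{t_n}\to\infty$, so only a bound on the error that is uniform in $n$ can survive it; I expect this to be the main obstacle, and everything else to be bookkeeping.

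With that in hand I would set $\tilde g_n := a_{-t_n}\circ(L_n+Q_n)\circ a_{t_n}$; by the linearity of $a_{\pm t_n}$ this is a polynomial map $\Cb^m\to\Cb^M$ whose component functions have degree $\le 2$. Fix a closed ball $\overline{B_0}\subset\Pc^m$. For $n$ large, $a_{t_n}(\overline{B_0})$ lies in the region where the $\Cc^2$ expansion of $h_n$ is valid, so $g_n-\tilde g_n = a_{-t_n}\hat E_n(a_{t_n}\,\cdot)$ on $\overline{B_0}$, which tends to $0$ uniformly on $\overline{B_0}$ by the estimate above. Since also $g_n\to g$ uniformly on $\overline{B_0}$ (the $\Cc^2$-convergence on $\overline{\Bb^m}$ transfers to uniform convergence on compact subsets of $\Pc^m$ via the biholomorphism $F_m$), we conclude $\tilde g_n\to g$ uniformly on $\overline{B_0}$.

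Finally, by the Cauchy estimates all holomorphic derivatives of $\tilde g_n$ converge (locally uniformly on $B_0$) to those of $g$; but every derivative of $\tilde g_n$ of order $\ge 3$ vanishes identically, so $D^\alpha g\equiv 0$ on $B_0$ for all $\abs{\alpha}\ge 3$. Hence $g$ agrees on $B_0$ with a polynomial map of degree $\le 2$, and since $\Pc^m$ is connected, analytic continuation gives that $g$ equals this polynomial map on all of $\Pc^m$. In particular $g$ extends to a holomorphic map $\Cb^m\to\Cb^M$ whose component functions are quadratic polynomials, as claimed. The degree count for $\tilde g_n$, the transfer of convergence between the ball and parabolic models, and the analytic continuation step are all routine.
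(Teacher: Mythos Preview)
Your proposal is correct and follows essentially the same approach as the paper: both use the identity $g_n = a_{-t_n}\circ h_n\circ a_{t_n}$ to rewrite the second-order Taylor remainder of $g_n$ as $a_{-t_n}\circ \hat E_n\circ a_{t_n}$, then combine the operator-norm bound $\norm{a_{-t_n}}\le e^{t_n}$ with $\norm{a_{t_n}z}\le e^{-t_n/2}\norm{z}$ and the uniform-in-$n$ estimate on $\hat E_n$ from the previous lemma to kill this remainder, forcing $g$ to be a quadratic polynomial. The only differences are organizational---the paper works directly with $E_n$ on $\mathcal{O}\cap\overline{\Pc^m}$ and invokes the $\Cc^2$ convergence $g_n\to g$, whereas you name the truncated map $\tilde g_n$, restrict to a compact ball, and invoke Cauchy estimates plus analytic continuation---but the substance is identical.
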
 

\begin{proof} 
Since $E_n = a_{-t_n} \circ \hat{E}_n \circ a_{t_n} $,  when $z \in \mathcal{O} \cap \overline{\mathcal{P}^m}$ is non-zero we have
\begin{align*}
\limsup_{ n \to \infty} \|E_n(z)\| & = \limsup_{ n \to \infty} \norm{a_{-t_n}\hat{E}_n(a_{t_n}z)} \leq \limsup_{ n \to \infty} e^{t_n} \norm{\hat{E}_n(a_{t_n}z)} \\
& =\limsup_{ n \to \infty} e^{t_n}\norm{a_{t_n}z}^2\frac{\norm{\hat{E}_n(a_{t_n}z)} }{\norm{a_{t_n}z}^2}  \leq \limsup_{ n \to \infty} \norm{z}^2\frac{\norm{\hat{E}_n(a_{t_n}z)} }{\norm{a_{t_n}z}^2}.
\end{align*} 
Since $a_{t_n}z \rightarrow 0$, the previous lemma implies that 
\begin{align*}
\limsup_{ n \to \infty} \|E_n(z)\| =0
\end{align*}
locally uniformly on $\mathcal{O} \cap \overline{\mathcal{P}^m}$. Since $g_n \rightarrow g$ in the $\Cc^2$ topology on $ \mathcal{O} \cap \overline{\mathcal{P}^m}$, this implies that the component functions of $g$ are quadratic polynomials. Hence, $g$  extends to a holomorphic function $\Cb^m \rightarrow \Cb^M$. 
\end{proof} 
 
Next, using the fact that that $a_{-t_n} \circ h_n \circ a_{t_n} = g_n$, we will restrict the form of $g$. 

\begin{lemma}$$
g(z)= \begin{pmatrix}
       \lambda &0\\
       0 &U
\end{pmatrix}z + 
\begin{pmatrix}
\sum_{2\leq k,\ell \leq m}L_{k\ell}z_k z_\ell\\
0\\
\vdots\\
0   
\end{pmatrix}
$$
where $\lambda > 0$ and $U$ is a $(M-1)\times(m-1)$ matrix.
\end{lemma}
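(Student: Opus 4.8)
The plan is to exploit the identity $g_n = a_{-t_n} \circ h_n \circ a_{t_n}$ together with the explicit action of $a_t$ in the parabolic model, namely $a_t(z_1,\dots,z_m)=(e^{-t}z_1, e^{-t/2}z_2,\dots,e^{-t/2}z_m)$ on $\Pc^m$ and analogously on $\Pc^M$, and then take $n\to\infty$. Write the Taylor expansion of $[h_n]_j$ at $0$ as in the preceding lemmas; since $h_n(e_1)=e_1'$ (Lemma~\ref{lem:value of e1}), in the parabolic model we have $h_n(0)=0$, so the constant term vanishes. Thus $[h_n]_j(z) = \sum_{k} b^{(n)}_{jk} z_k + \sum_{k,\ell} c^{(n)}_{jk\ell} z_k z_\ell + [\hat E_n]_j(z)$. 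Conjugating by $a_{t_n}$ scales the input variable $z_k$ by $e^{-t_n}$ if $k=1$ and by $e^{-t_n/2}$ if $k\geq 2$, and scales the output component $[h_n]_j$ by $e^{t_n}$ if $j=1$ and by $e^{t_n/2}$ if $j\geq 2$. So $[g_n]_j(z)$ is obtained from $[h_n]_j$ by multiplying the coefficient of $z_k$ by $e^{t_n}\cdot e^{-t_n}=1$ (if $j=1,k=1$), $e^{t_n}e^{-t_n/2}=e^{t_n/2}$ (if $j=1,k\geq2$), $e^{t_n/2}e^{-t_n}=e^{-t_n/2}$ (if $j\geq2,k=1$), $e^{t_n/2}e^{-t_n/2}=1$ (if $j\geq2,k\geq2$), and the quadratic coefficient of $z_kz_\ell$ by $e^{t_n}e^{-t_n}\cdot(\text{something})$: precisely $z_1z_1$ gets $e^{t_n}e^{-2t_n}=e^{-t_n}$, $z_1z_k$ ($k\geq2$) gets $e^{t_n}e^{-3t_n/2}=e^{-t_n/2}$, $z_kz_\ell$ ($k,\ell\geq2$) gets $e^{t_n}e^{-t_n}=1$ in component $1$, and all quadratic coefficients in components $j\geq2$ acquire a factor $e^{t_n/2}e^{-t_n}=e^{-t_n/2}$ or smaller. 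Since $t_n\to\infty$, all the terms carrying a factor $e^{-t_n/2}$ or $e^{-t_n}$ must tend to $0$ in the limit, because $g_n\to g$ in $\Cc^2$ on $\mathcal O\cap\overline{\Pc^m}$ and hence the coefficients converge; moreover the surviving coefficients converge to those of $g$. The error term $E_n$ was already shown to vanish in the limit in the previous lemma.

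Concretely, carrying this out: in component $1$, the only coefficients that survive are the linear coefficient of $z_1$ (call its limit $\lambda$) and the quadratic coefficients of $z_kz_\ell$ with $k,\ell\geq 2$ (giving the $\sum L_{k\ell}z_kz_\ell$ term); all other coefficients in component $1$ — the linear coefficients of $z_k$ for $k\geq 2$, and the quadratic coefficients involving $z_1$ — carry a positive power of $e^{-t_n}$ or $e^{-t_n/2}$ and hence vanish in the limit. Wait: the linear coefficient of $z_k$, $k\geq 2$, in component $1$ is scaled by $e^{t_n/2}$, which blows up; but since $[g_n]_1$ converges, that coefficient in $h_n$ must already be going to $0$ fast enough, and its contribution to $g_n$ — hmm, one must be slightly careful here. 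Let me re-read: we need the coefficient of $z_k$ in $[g_n]_1$ to converge; that coefficient equals $e^{t_n/2} b^{(n)}_{1k}$. For the limit $g$ to exist this must converge, but a priori it could converge to something nonzero. The resolution is that $h_n\to h$ in $\Cc^2$ on the neighborhood, so $b^{(n)}_{1k}\to b_{1k}$; if $b_{1k}\neq 0$ then $e^{t_n/2}b^{(n)}_{1k}\to\infty$, contradicting convergence of $g_n$; hence $b_{1k}=0$ and we must examine the rate. Actually the cleanest route: the coefficient of $z_k$ in $[g_n]_1$ equals the coefficient of $z_k$ in $[h_n]_1$ only after the scaling, and we separately know (from $g_n\to g$) it converges; since it equals $e^{t_n/2}b^{(n)}_{1k}$ and $e^{t_n/2}\to\infty$ while (if things converge) the product is bounded, we get $b^{(n)}_{1k}=o(e^{-t_n/2})\to 0$, so its limiting contribution... no. I think the honest statement is: the coefficient of $z_k$ in $[g_n]_1$ is $e^{t_n/2}b^{(n)}_{1k}$, and this need NOT go to zero. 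But then $g$ would have a nonzero $z_k$ coefficient in component $1$. To rule this out one must use more — and indeed this is where a subsequent lemma or the structure of proper maps (the image lands in $\Pc^M$, preservation of the defining function) comes in. So the main obstacle is precisely separating "which coefficients are forced to vanish purely by the conjugation/limit argument" from "which require the additional constraint that $g$ maps $\Pc^m$ into $\Pc^M$."

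Therefore the step I expect to be the main obstacle is showing that the coefficient of $z_1$ in component $1$ is a \emph{positive} real number $\lambda>0$ and that no unwanted linear or mixed-quadratic terms survive in component $1$. For positivity and reality of $\lambda$: one uses that $g$ maps $\Pc^m$ into $\Pc^M$ and fixes $0$, so $g$ must send the boundary hypersurface $\{\operatorname{Im}z_1 = |z_2|^2+\cdots\}$ near $0$ into $\{\operatorname{Im}w_1\geq |w_2|^2+\cdots\}$; comparing the linear parts of the defining functions forces $[g]_1$ to have the form $\lambda z_1 + (\text{quadratic in }z_2,\dots,z_m)$ with $\lambda>0$, and forces the $z_1$-coefficient in components $2,\dots,M$ to vanish, i.e. the linear part is block-diagonal $\operatorname{diag}(\lambda, U)$. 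The vanishing of the $e^{t_n/2}$-scaled coefficients then follows because a nonzero such coefficient in component $1$ would, after combining with the quadratic-in-$z_1$ terms that are killed, violate the containment $g(\Pc^m)\subset\Pc^M$ at first order; alternatively these coefficients are handled by the same defining-function comparison. I would organize the write-up as: (i) expand and conjugate to get the coefficient-scaling table; (ii) use convergence $g_n\to g$ to pass to the limit, obtaining that $g$ is a quadratic polynomial with component $1$ possibly containing all monomials and components $\geq 2$ being affine — then (iii) invoke $g(\Pc^m)\subset\Pc^M$ and $g(0)=0$ to compare defining functions and extract the stated block form with $\lambda>0$, the vanishing of $z_1$ in components $\geq2$, the vanishing of the pure-$z_1$ and mixed $z_1z_k$ quadratic terms in component $1$, and the vanishing of linear $z_k$ ($k\geq2$) terms in component $1$. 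I would defer the delicate estimate on the $e^{t_n/2}$-coefficients to this defining-function comparison rather than trying to squeeze it out of the rescaling alone.
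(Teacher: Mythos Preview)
Your approach is correct and matches the paper's: the conjugation argument kills every Taylor coefficient scaled by a negative power of $e^{t_n}$ (this already disposes of the $z_1$-coefficient in components $j\geq 2$ and of all quadratic terms except the $z_kz_\ell$ with $k,\ell\geq 2$ in component~$1$, so you need not reprove those via the defining function). The two items you correctly flagged as requiring extra input---that $\lambda$ is real and positive, and that $\partial[g]_1/\partial z_k(0)=0$ for $k\geq 2$---are handled in the paper exactly as you eventually proposed, namely by the tangent-hyperplane (equivalently, defining-function) constraint coming from $g(\Pc^m)\subset\Pc^M$ and $g(0)=0$.
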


\begin{proof}
    Since $a_{-t_n} \circ h_n \circ a_{t_n} = g_n$,
$$
\frac{\partial [g_n]_j}{\partial z_k}(0) =\sigma_{njk} \frac{\partial [h_n]_j}{\partial z_k}(0) $$
 \text{and}

$$
\frac{\partial^2 [g_n]_j}{\partial z_k\partial z_\ell}(0)  = \sigma_{njk\ell} \frac{\partial^2 [h_n]_j}{\partial z_k\partial z_\ell}(0)
$$
where
$$
\sigma_{njk}=
\begin{cases}
			1 & \text{if $j=k=1$, }\\
            e^{\frac{1}{2}t_n} & \text{if $j=1$ and $2\leq k \leq m$, }\\
            e^{-\frac{1}{2}t_n} &\text{if $2 \leq j \leq M$ and $k=1$, } \\
            1 &\text{if $2 \leq j \leq M$ and $2\leq k \leq m$ }\\
		 \end{cases}
$$
and$$
\sigma_{njk\ell}=
\begin{cases}
			e^{-t_n} & \text{if $j=k=\ell=1$, }\\
            e^{-\frac{1}{2}t_n} & \text{if $j=k=1$ and $2 \leq \ell \leq m$, or $j=\ell=1$ and $2 \leq k \leq m$,}\\
            1 &\text{if $j=1$ and $ 2 \leq k, \ell \leq m$,} \\
            e^{-\frac{3}{2}t_n} &\text{if $2 \leq j \leq M$ and $k=\ell=1$,} \\
            e^{-t_n} &\text{if $2 \leq j \leq M$, $1 \leq k \leq m$, and $\ell =1$,} \\
            e^{-t_n} &\text{if $2 \leq j \leq M$, $k=1$, and $2 \leq \ell \leq m$,} \\
            e^{-\frac{1}{2}t_n} &\text{if $2 \leq j \leq M$ and $2 \leq k,\ell \leq m.$ } \\
		 \end{cases}
$$
Since $g_n \rightarrow g$ and $h_n \rightarrow h$ in the $\Cc^2$ topology near 0, when $2 \leq j \leq M$ we obtain 
$$
\frac{\partial [g]_j}{\partial z_1}(0) = \lim_{n \rightarrow \infty} \frac{\partial [g_n]_j}{\partial z_1}(0) = \lim_{n \rightarrow \infty} e^{-\frac{1}{2}t_n} \frac{\partial [h_n]_j}{\partial z_1}(0) = 0 \cdot \frac{\partial [h]_j}{\partial z_1}(0)=0. 
 $$
 Likewise, 
\begin{align*} 
    \frac{\partial^2 [g]_1}{\partial z_1\partial z_\ell}(0) = &
    \frac{\partial^2 [g]_1}{\partial z_k\partial z_1}(0) = 
    \frac{\partial^2 [g]_j}{\partial z_1^2}(0) = \frac{\partial^2 [g]_j}{\partial z_k\partial z_1}(0) =
     \frac{\partial^2 [g]_j}{\partial z_1\partial z_\ell}(0) =
    \frac{\partial^2 [g]_j}{\partial z_k\partial z_\ell}(0) = 0
\end{align*} 
for $2\leq j \leq M$ and $2 \leq k,\ell \leq m.$

Since $g(0) = 0$ and $g(\mathcal{P}^m)= \mathcal{P}^M$, the Jacobian matrix $g'(0)$ must map the (complex) tangent hyperplane of $\partial\Pc^m$ at $0$ into the (complex) tangent hyperplane of $\partial\Pc^M$ at $0$. So, we must have 
\begin{align*}
g'(0) \Big(\Rb \cdot ie_1 + \Cb \cdot e_2 + \cdots + \Cb \cdot e_m\Big) \subset \Rb \cdot ie_1 + \Cb \cdot e_2 + \cdots + \Cb \cdot e_M
\end{align*}
and 
\begin{align*}
g'(0) \Big(\Cb \cdot e_2 + \cdots + \Cb \cdot e_m\Big) \subset \Cb \cdot e_2 + \cdots + \Cb \cdot e_M.
\end{align*}
Thus  $\frac{\partial [g]_1}{\partial z_1}(0) > 0$ and $\frac{\partial [g]_1}{\partial z_k}(0) = 0$, whenever $2\leq k \leq m$.

The above computations show that $g$ has the desired form. 
\end{proof}

\begin{lemma}\ \begin{enumerate}
\item The columns of $U$ are orthogonal and each has length $\sqrt{\lambda}$.  
\item $L_{k\ell} = 0$ for all $2 \leq k,\ell \leq m$.
\end{enumerate}
\end{lemma}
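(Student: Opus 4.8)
The plan is to exploit the one piece of information about $g$ not yet used: $g$ is a proper holomorphic map $\Pc^m \to \Pc^M$ (being conjugate to $f$ by automorphisms), so it carries the part of $\partial \Pc^m$ near $0$ into $\partial \Pc^M$. Write $z = (z_1, z')$ with $z' = (z_2, \dots, z_m)$ and set $Q(z') := \sum_{2 \leq k, \ell \leq m} L_{k\ell} z_k z_\ell$, so that the previous lemma gives $[g]_1(z) = \lambda z_1 + Q(z')$ and $\sum_{j=2}^{M} \abs{[g]_j(z)}^2 = \norm{U z'}^2$. Since near the origin $\partial \Pc^j = \{ {\rm Im}(w_1) = \abs{w_2}^2 + \cdots + \abs{w_j}^2 \}$, the boundary-to-boundary property says that
$$
\lambda\, {\rm Im}(z_1) + {\rm Im}(Q(z')) = \norm{U z'}^2
$$
whenever ${\rm Im}(z_1) = \norm{z'}^2$ and $(z_1, z')$ is close to $0$. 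The left-hand side does not involve ${\rm Re}(z_1)$, so taking $z_1 = i \norm{z'}^2$ yields the identity $\lambda \norm{z'}^2 + {\rm Im}(Q(z')) = \norm{U z'}^2$ for all $z'$ near $0$; as both sides are polynomials in $({\rm Re}\, z', {\rm Im}\, z')$, it holds for every $z' \in \Cb^{m-1}$.

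For part (2), I would substitute $z' \mapsto e^{i\theta} z'$ into this identity. The two norm terms are unchanged while $Q(e^{i\theta} z') = e^{2i\theta} Q(z')$, so ${\rm Im}(e^{2i\theta} Q(z')) = {\rm Im}(Q(z'))$ for all $\theta \in \Rb$ and all $z'$; choosing $\theta = \pi/2$ forces ${\rm Im}(Q(z')) \equiv 0$. A holomorphic function with identically vanishing imaginary part is constant, and $Q(0) = 0$, so $Q \equiv 0$, that is, $L_{k\ell} = 0$ for all $2 \leq k, \ell \leq m$.

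Feeding $Q \equiv 0$ back in gives $\norm{U z'}^2 = \lambda \norm{z'}^2$ for all $z'$, i.e. $\ip{(U^*U - \lambda I) z', z'} = 0$ identically; since $U^*U - \lambda I$ is Hermitian, polarization yields $U^*U = \lambda I_{m-1}$, which is precisely the assertion that the columns of $U$ are pairwise orthogonal and each has norm $\sqrt{\lambda}$, proving (1). I do not expect a serious obstacle here: the two mild points to pin down are that a proper holomorphic map which extends continuously near a boundary point must send that point to the boundary (legitimizing the defining-function identity), and the passage from a neighborhood of $0$ to all of $\Cb^{m-1}$ via the identity theorem; the circle-averaging trick in part (2) is the one genuinely new idea.
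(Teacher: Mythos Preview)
Your proof is correct and follows essentially the same route as the paper: both arguments plug the boundary parametrization $(i\norm{z'}^2, e^{i\theta} z')$ into the defining equation of $\partial\Pc^M$, observe that the right-hand side $\norm{Uz'}^2-\lambda\norm{z'}^2$ is independent of $\theta$ while the left-hand side ${\rm Im}\big(e^{2i\theta}Q(z')\big)$ is not unless $Q\equiv 0$, and then read off $\norm{Uz'}^2=\lambda\norm{z'}^2$. The only cosmetic difference is that you first derive the $\theta=0$ identity and then rotate, whereas the paper carries the $e^{i\theta}$ from the start; your explicit choice $\theta=\pi/2$ and the polarization step $U^*U=\lambda I_{m-1}$ simply spell out what the paper leaves implicit.
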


\begin{proof}
If $ w = (w_2, \cdots, w_m) \in \mathbb{C}^{m-1}$, then $ (i\|w\|^2,e^{i\theta}w) \in \partial\mathcal{P}^m$ for all $\theta \in \mathbb{R}.$
So, 
$$
g(i\|w\|^2,e^{i\theta}w) = \begin{pmatrix}
\lambda i\|w\|^2 + e^{2i\theta}\sum_{2\leq k,\ell \leq m}L_{k\ell}w_k w_\ell \\
e^{i\theta}Uw
\end{pmatrix} \in \partial\mathcal{P}^M. 
$$
Hence, $$
{\rm Im}\left(\lambda i\|w\|^2 + e^{2i\theta}{\displaystyle\sum_{2\leq k,\ell \leq m}}L_{k\ell}w_k w_\ell\right) = \|Uw\|^2
$$
or equivalently
$$
{\rm Im}\left(e^{2i\theta}\sum_{2\leq k,\ell \leq m}L_{k\ell}w_k w_\ell\right) = \|Uw\|^2 - \lambda \|w\|^2
$$
for all $w \in \Cb^{m-1}$ and $\theta \in \Rb$. Since $\theta \in \Rb$ is arbitrary, this is only possible if $L_{k\ell} = 0$ for all $k,\ell.$

Then 
$$
\|Uw\| =  \sqrt{\lambda}\|w\|
$$
for all $w \in \Cb^{m-1}$, which is only possible if the columns of $U$ orthogonal and each has length $\sqrt{\lambda}$.  
\end{proof}

Suppose $U_1, \dots, U_{m-1} \in \Cb^{M-1}$ are the columns of $U$. Fix $U_{m}, \dots, U_{M-1} \in \Cb^{M-1}$ such that 
$$
\frac{1}{\sqrt{\lambda}}U_1, \dots, \frac{1}{\sqrt{\lambda}}U_{m-1}, U_m, \dots, U_{M-1}
$$
is an orthonormal basis of of $\mathbb{C}^{M-1}$. Then let 
$$
U' := \left[\frac{1}{\sqrt{\lambda}}U_1 \ \cdots  \ \frac{1}{\sqrt{\lambda}}U_{m-1} \ U_m \ \cdots \ U_{M-1}\right] \in \mathsf{U}(M-1)
$$
and 
$$
A: =  \begin{pmatrix}
       1 &0\\
       0 &U'^{-1}
\end{pmatrix} \in \mathsf{U}(M). 
$$
Notice that $A \in \Aut(\Pc^M)$ and 
$$
    (A \circ a_{\log(\lambda)} \circ g)(z) = 
  (z,0)
$$
which completes the proof. 

\bibliographystyle{alpha}
\bibliography{ref} 

\end{document}